\DeclarePairedDelimiter\ceil{\lceil}{\rceil}
\newcommand{\R}{\mathbb{R}}
\newcommand{\ds}{\displaystyle}
\newcommand{\Id}{\textrm{Id}}
\newcommand{\x}{{\bf x}}
\newcommand{\y}{{\bf y}}
\newcommand{\p}{{\bf p}}
\newcommand{\Div}{{\rm div}}
\newtheorem{Theorem}{Theorem}[section]
\newtheorem{Proposition}{Proposition}[section]
\newtheorem{Corollary}{Corollary}[section]
\newtheorem{Remark}{Remark}[section]
\newtheorem*{Assumption*}{Assumption}
\newtheorem{Definition}{Definition}[section]
\newtheorem{problem}{Problem}[section]
\newtheorem*{problem*}{Problem}
\numberwithin{equation}{section}
\title{ The Carleman convexification method for Hamilton-Jacobi equations  on the whole space}
\author{Huynh P.N. Le\thanks{Faculty of Mathematics and Computer Science, University of Science, Vietnam National University, Ho Chi Minh City, Vietnam}
\and
Thuy T. Le\thanks{
Department of Mathematics and Statistics, University of North Carolina at
Charlotte, Charlotte, NC 28223, USA. 
}  \and Loc H. Nguyen\footnotemark[2] \thanks{Corresponding author \texttt{loc.nguyen@uncc.edu.}}}
\date{}
\begin{document}

\maketitle
\begin{abstract}
We propose a new globally convergent numerical method  to solve  Hamilton-Jacobi equations in $\R^d$, $d \geq 1$.
This method is named as the Carleman  convexification method.
By Carleman  convexification, we mean that we use a Carleman weight function to convexify the conventional least squares mismatch functional. 
We will prove a new version of the convexification theorem guaranteeing that the mismatch functional involving the Carleman weight function is strictly convex 
and, therefore, has a unique minimizer.
Moreover, a consequence of our convexification theorem guarantees that the minimizer of the Carleman weighted mismatch functional is an approximation of the viscosity solution we want to compute. 
Some numerical results in 1D and 2D will be presented.


\end{abstract}

\noindent{\it Key words: numerical methods; Carleman estimate; 
 Hamilton-Jacobi equations; viscosity solutions; vanishing viscosity process.} 

\noindent{\it AMS subject classification:
35D40, 
35F21
} 	
\section{Introduction}\label{sec1}

Let $d \geq 1$ be the spatial dimension.
Let $H: \mathbb{R}^d \times  \mathbb{R}^d \to \mathbb{R}$ be a function  satisfying the following growth condition 
\begin{equation}
	|H(\x, \p)| \leq C|\p|^{k}
	\quad
	\mbox{for all }
	\p \in \R^d
	\label{growth}
\end{equation}
for some number $k > 0.$
In this paper, we solve the following problem.
\begin{problem}
Fix $\lambda > 0$. 
Assume that equation 
\begin{equation}
    \lambda u +  H({\bf x}, \nabla u) = 0 \quad 
    \mbox{for all } {\bf x} \in \mathbb{R}^d
    \label{HJ}
\end{equation}
has a  unique viscosity solution $u$. 
Compute $u$.
\label{p1}
\end{problem}

In general, the condition in the problem statement above requiring that \eqref{HJ} has a unique viscosity solution might not always hold true. 
We provide an example of a set of conditions on $H$ such that \eqref{HJ} has a unique solution. 
If $H$ is such that $|H(\x, \p) - H(\y, \p)| \leq C(1 + |\p|)|\x - \y|$ and $|H(\x, \p) - H(\x, {\bf q})| \leq C|\p - {\bf q}| $ for some positive constant $C$ 
for all $\x,$ $\y$, $\p$, ${\bf q}$ in $\R^d,$
then the comparison principle in \cite[Theorem 1.18]{Hung:book2021} is valid.
  The uniqueness follows directly. 
The existence of solution to \eqref{HJ} is studied in \cite[Chapter 2]{Hung:book2021}. 
We refer the reader to
\cite{BCD, Barles, CrandallEvansLions84, CrandallLions83, Lions, Hung:book2021} for more important and interesting theory about Hamilton-Jabobi equation.
For  convenience, we recall, from the pioneer works \cite{CrandallEvansLions84, CrandallLions83}  as well as the recent published book \cite{Hung:book2021}, the concept of viscosity solutions to Hamilton-Jacobi equations. Viscosity sub(super)-solutions  to \eqref{HJ} are defined as follows.

\begin{Definition}[Viscosity solutions]
Let $F: \R^d \times \R \times \R^d \to \R$ be an Hamiltonian. 
Let $u \in C(\R^d)$.
\begin{itemize}
\item We say that $u$ is a viscosity subsolution to $F(\x, u, \nabla u) = 0$ if for any test function $\varphi \in C^1(\R^d)$ such that $u-\varphi$ has a strict maximum at $\x_0 \in \R^d$, then
\[
F(\x_0, u(\x_0), \nabla \varphi(\x_0)) \leq 0 \quad \text{ if } \x_0 \in \R^d.
\]

\item We say that $u$ is a viscosity supersolution to $F(\x, u, \nabla u) = 0$ if for any test function $\varphi \in C^1(\R^d)$ such that $u-\varphi$ has a strict minimum at $\x_0 \in \R^d$, then
\[
F(\x_0, u(\x_0),  \nabla \varphi(\x_0)) \geq 0 \quad \text{ if } \x_0 \in \R^d.
\]

\item We say that $u$ is a viscosity solution to $F(\x, u, \nabla u) = 0$ if it is both viscosity subsolution and  viscosity supersolution to this equation.
\end{itemize}
\label{def_viscos}
\end{Definition}

A number of  efficient and fast numerical approaches and techniques (many of which are of high orders) have been developed for Hamilton-Jacobi equations of the form $F(\x, u, \nabla u) = 0$ where $F$ is called the Hamiltonian.
For finite difference monotone and consistent schemes of first-order equations and applications, see \cite{BS-num, CL-rate,  OsFe, Sethian,  Sou1} for details and recent developments.
If $F=F(\x,s,\p)$ is convex in $\p$ and satisfies some appropriate conditions, it is possible to construct some semi-Lagrangian approximations by the discretization of the Dynamical Programming Principle associated to the problem, see \cite{FaFe1, FaFe2} and the references therein.
See \cite{Abgrall, Abgrall2, BrysonLevy, CagnettiGomesTran, CamilliCDGomes, CockburnMerevQian, GallistlSprekelerSuli, KaoOsherQian, LiQian,  ObermanSalvador, OsherSethian, OsherShu,   QianZhangZhao, SethianVladimirsky, TsaiChengOsherZhao, Tsitsiklis} for an incomplete list of results in this directions.
Another approach to solve  Hamilton-Jacobi equations is based on optimization \cite{DanielDurou, HornBrooks, LeclercBobick, Szeliski}. 
However, due to the nonlinearity of the Hamiltonian, the least squares cost functional is nonconvex and might have multiple local minima and ravines.
Hence, the methods based on optimization can provide reliable numerical solutions if good initial guesses of the true solutions are given. 
The key point of the convexification method in this paper is to include in such least  squares mismatch functionals some Carleman weight functions to make these functionals convex.
Therefore, the requirement about the good initial guess is completely relaxed.
On the other hand, we especially draw the reader attention to \cite{Abgrall,  OsFe, OsherShu} for the Lax--Friedrichs schemes and \cite{KaoOsherQian,LiQian} for the Lax--Friedrichs sweeping algorithm to solve Hamilton-Jacobi equations. 
Although strong, these methods might not be applicable  to solve \eqref{HJ}. The main reason is that in computation, rather than  finding a solution to \eqref{HJ} on the whole space $\R^d$,
one can compute the restriction of the solution to \eqref{HJ} on a bounded domain. In this case, the boundary conditions on the boundary of this bounded domain are unclear while the sweeping methods are initiated by the boundary conditions of solutions.

Recently, we, in \cite{KhoaKlibanovLoc:SIAMImaging2020, KlibanovNguyenTran:JCP2022}, developed the convexification method to solve (1) the inverse scattering problems and (2) a general class of Hamilton-Jacobi equations in a bounded domain. The efficiency of the convexification method was rigorously proved. However, the version of the convexification method above requires both Neumann and Dirichlet boundary conditions of the solution, which are not always available. Therefore, in order to apply the convexification method in \cite{KhoaKlibanovLoc:SIAMImaging2020, KlibanovNguyenTran:JCP2022} without requesting the knowledge of the boundary conditions, we have to develop a new version. 
The key of the success involves a new piece-wise Carleman estimate and a new mismatch functional with a suitable Carleman weight function.
Our method to solve \eqref{HJ} consists of two stages.
In stage 1, we apply a truncation technique to reduce the problem of solving \eqref{HJ} on the whole $\R^d$ to the problem of computing viscosity of another Hamilton-Jacobi equation on a bounded domain. The boundary conditions of the new Hamilton-Jacobi equation are unknown but we can estimate them in term of the cut-off function.
Then, in stage 2, we minimize a mismatch functional with a special Carleman weight function involved, called the Carleman weighted mismatch functional. 
The presence of the Carleman weight function is extremely important in the sense that it guarantees  the strict convexity of the Carleman weighted mismatch functional.  As a result, our  Carleman weighted mismatch functional  has a unique minimizer in any bounded set of the functional space under consideration.
We will apply a Carleman estimate to prove this theoretical result, called a convexification theorem. 
Besides guaranteeing the strict convexity of the cost functional, the convexification theorem can be used to prove that the minimizer is an approximation of the desired viscosity solution.

It is worth to mention that several versions of the convexification method have
been developed since it was first introduced in \cite%
{KlibanovIoussoupova:SMA1995} for a coefficient inverse problem for a hyperbolic equation.
 We cite here \cite{KlibanovNik:ra2017, VoKlibanovNguyen:IP2020, 
KhoaKlibanovLoc:SIAMImaging2020,
Klibanov:sjma1997, 
Klibanov:nw1997, 
Klibanov:ip2015, 
Klibanov:IPI2019,
KlibanovLeNguyenIPI2021,    
Klibanov:ip2020,
LeKlibanov:ip2022,
SmirnovKlibanovNguyen:IPI2020} and references therein
for some important works in this area and their real-world applications in bio-medical imaging, non-destructed testing, 
travel time tomography, identifying anti-personnel explosive devices buried under the ground, {\it etc.} 
The crucial mathematical ingredient  that
guarantees the strict convexity of this functional is the presence of some
Carleman estimates. 
The original idea of applying Carleman estimates to prove the uniqueness for a large class of important nonlinear mathematical problems was first published in \cite{BukhgeimKlibanov:smd1981}. 
It was discovered later in \cite{KlibanovIoussoupova:SMA1995, KlibanovLiBook}, that the idea of \cite%
{BukhgeimKlibanov:smd1981} can be successfully modified to develop globally
convergent numerical methods for coefficient inverse problems using the
convexification method.

The structure of the paper. 
In Section \ref{sec_changevariable}, we reduce the problem of computing solution to \eqref{HJ} to the problem of computing viscosity solution to another Hamilton-Jacobi equation on a bounded domain of $\R^d$.
In Section \ref{sec_Car}, we prove a Carleman estimate, which plays a key role in the proof of the convexification theorem.
In Section \ref{sec_convexi}, we prove the convexification theorem.
In Section \ref{sec5}, we show some numerical examples.
Section \ref{sec6} is for the concluding remarks.

\section{A change of variable}\label{sec_changevariable}

Rather than computing the solution $u$ to \eqref{HJ} on the whole space $\R^d$, we compute the restriction of $u$ on an arbitrary bounded domain $G$ of $\R^d$. 
Without lost of generality, we assume that $G$ is compactly contained inside the cube
 $\Omega = (-R, R)^d$ for some $R > 0.$
Let $\delta \in (0, 1)$ be a small number. 
Let $\chi_\delta$ be a cut off function in the class $C^\infty(\mathbb{R}^d)$ satisfying 
\begin{equation}
    \chi_\delta(\x) = 
    \left\{
        \begin{array}{ll}
            > c > 0 & \x \in G  \\
            \in (\delta, c) & \x \in \Omega \setminus G,\\
            = \delta &\x \in \R^d \setminus \Omega
        \end{array}
    \right.
    \label{chi}
\end{equation}
for some constant $c > 0$.
Define 
\begin{equation}
	v(\x) = \chi_\delta(\x) u(\x)
	\quad
	\mbox{or equivalently}
	\quad
	u(\x) = \frac{v(\x)}{\chi_{\delta}(\x)}
	\label{2,1}
\end{equation}
for all $\x \in \R^d.$
Since
\begin{align*}
	\lambda u(\x) +  H({\bf x}, \nabla u(\x)) &= \lambda \frac{v(\x)}{\chi_\delta} + H\left(\x,\nabla \frac{v(\x)}{\chi_\delta(\x)}\right)\\
	&= \lambda \frac{v(\x)}{\chi_\delta(\x)} + H\left(\x,\frac{\chi_\delta(\x)\nabla v(\x) - v(\x)\nabla \chi_\delta(\x)}{\chi_\delta^2(\x)}\right),
\end{align*}
it follows from \eqref{HJ} that
\begin{equation}
    \lambda \frac{v(\x)}{\chi_\delta(\x)} + H\left(\x,\frac{\chi_\delta(\x)\nabla v(\x) - v(\x)\nabla \chi_\delta(\x)}{\chi_\delta^2(\x)}\right) = 0
    \label{2,,3}
\end{equation}
for all $\x \in \R^d.$
Multiplying $\chi_{\delta}^{2k}(\x)$ to both sides of \eqref{2,,3}, we  derive  an equation for $v$, read as
\begin{equation}
	F(\x, v(\x), \nabla v(\x)) := \chi_{\delta}^{2k}(\x)\left[\lambda  \frac{v(\x)}{\chi_\delta(\x)} +  H\left(\x,\frac{\chi_\delta(\x)\nabla v(\x) - v(\x)\nabla \chi_\delta(\x)}{\chi_\delta^2(\x)}\right) \right]= 0
	\label{HJv}
\end{equation}
for all $\x \in \R^d.$
\begin{Remark}
	The presence of $\chi_{\delta}^{2k}(\x)$ in the the right hand side of \eqref{HJv} helps us remove the blow-up behavior of the term $\frac{\chi_\delta(\x)\nabla v(\x) - v(\x)\nabla \chi_\delta(\x)}{\chi_\delta^2(\x)}$ as $\delta \to 0^+$. In fact, by \eqref{growth},
	\begin{align*}
		\left| \chi_{\delta}^{2k}(\x) H\left(\x,\frac{\chi_\delta(\x)\nabla v(\x) - v(\x)\nabla \chi_\delta(\x)}{\chi_\delta^2(\x)}\right)\right|
		&\leq C  \chi_{\delta}^{2k}(\x) \left|\frac{\chi_\delta(\x)\nabla v(\x) - v(\x)\nabla \chi_\delta(\x)}{\chi_\delta^2(\x)}\right|^{k}
		\\
		&= C|\chi_\delta(\x)\nabla v(\x) - v(\x)\nabla \chi_\delta(\x)|^k,
	\end{align*}
	which is uniformly bounded provided that $v \in C^1(\R^d).$ 
	This step  is crucial in numerical computation.
\end{Remark}

We have the proposition.

\begin{Proposition}
    The function $u$ is a viscosity solution to \eqref{HJ} if and only if the function $v$ is a viscosity solution to \eqref{HJv}.
    \label{thm1}
\end{Proposition}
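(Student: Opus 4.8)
The plan is to transport test functions across the substitution $v=\chi_\delta u$ and check the four required implications (subsolution and supersolution, in each direction). Two preliminary remarks do most of the bookkeeping. First, by \eqref{chi} we have $\chi_\delta\in C^\infty(\R^d)$ and $\chi_\delta\ge\delta>0$ everywhere on $\R^d$, so $1/\chi_\delta\in C^\infty(\R^d)$ as well; in particular $u\in C(\R^d)$ if and only if $v\in C(\R^d)$, so Definition \ref{def_viscos} is applicable to one exactly when it is applicable to the other. Second, the factor $\chi_\delta^{2k}(\x)$ in front of the bracket in \eqref{HJv} is strictly positive, hence multiplying or dividing an inequality by it does not change its direction. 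Moreover the correspondence between the two PDEs is symmetric: $u=v/\chi_\delta$ with $1/\chi_\delta$ again smooth and positive, and a one-line back-substitution in \eqref{HJv} gives $F(\x,\chi_\delta u,\chi_\delta\nabla u+u\nabla\chi_\delta)=\chi_\delta^{2k}(\x)\bigl[\lambda u+H(\x,\nabla u)\bigr]$. Consequently it suffices to prove one implication in full, say that $u$ being a viscosity subsolution of \eqref{HJ} forces $v$ to be a viscosity subsolution of \eqref{HJv}; the supersolution case is word-for-word the same with inequalities and extrema reversed, and the converse implication is obtained by running the same argument with the roles of $u$ and $v$ interchanged and $\chi_\delta$ replaced by $1/\chi_\delta$.

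For that implication, let $\varphi\in C^1(\R^d)$ be a test function such that $v-\varphi$ has a strict maximum at $\x_0$, and set $c_0:=v(\x_0)-\varphi(\x_0)$. The key step is to feed the correct test function into the definition of "$u$ is a viscosity subsolution": define
\[
\psi(\x):=\frac{\varphi(\x)+c_0}{\chi_\delta(\x)},\qquad \x\in\R^d,
\]
which lies in $C^1(\R^d)$ because $\chi_\delta\in C^\infty(\R^d)$ and $\chi_\delta\ge\delta>0$. Since $v(\x)-\varphi(\x)\le c_0$ with equality only at $\x_0$, dividing by $\chi_\delta(\x)>0$ gives $u(\x)-\psi(\x)\le 0$ with equality only at $\x_0$; that is, $u-\psi$ attains a strict maximum at $\x_0$. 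Definition \ref{def_viscos} applied to $u$ then yields $\lambda u(\x_0)+H\bigl(\x_0,\nabla\psi(\x_0)\bigr)\le 0$.

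It remains to compute $\nabla\psi(\x_0)$. Differentiating gives $\nabla\psi=\bigl(\chi_\delta\nabla\varphi-(\varphi+c_0)\nabla\chi_\delta\bigr)/\chi_\delta^2$, and because $\varphi(\x_0)+c_0=v(\x_0)$ this evaluates at $\x_0$ to $\nabla\psi(\x_0)=\bigl(\chi_\delta(\x_0)\nabla\varphi(\x_0)-v(\x_0)\nabla\chi_\delta(\x_0)\bigr)/\chi_\delta^2(\x_0)$, which is exactly the argument of $H$ appearing in \eqref{HJv}. Substituting this together with $u(\x_0)=v(\x_0)/\chi_\delta(\x_0)$ into the inequality above and multiplying through by $\chi_\delta^{2k}(\x_0)>0$ produces precisely $F(\x_0,v(\x_0),\nabla\varphi(\x_0))\le 0$, the viscosity subsolution inequality for \eqref{HJv} at $\x_0$.

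I do not anticipate a genuine obstacle; the single point that must be handled with care is the construction of $\psi$. The naive choice $\psi=\varphi/\chi_\delta$ fails, since $u-\varphi/\chi_\delta$ need not have its maximum at $\x_0$; it is the additive constant $c_0$ inside the numerator that simultaneously restores the strict-maximum property and makes $\nabla\psi(\x_0)$ coincide with the quantity inside $F$. The positivity $\chi_\delta\ge\delta>0$ is used twice — to keep $\psi$ of class $C^1$, and to preserve the implication $v-\varphi\le c_0\Rightarrow u-\psi\le 0$ upon dividing by $\chi_\delta$ — and the strictly positive prefactor $\chi_\delta^{2k}$ is what lets us pass freely between the bracketed inequality and $F\le 0$.
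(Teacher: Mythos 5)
Your proposal is correct and follows essentially the same route as the paper's proof: the test function $\psi=(\varphi+c_0)/\chi_\delta$ with $c_0=v(\x_0)-\varphi(\x_0)$ is exactly the paper's choice of $\phi$ in \eqref{2.3}, the strict-maximum transfer and the gradient computation at $\x_0$ match, and both treat the supersolution case and the converse by symmetry. Your explicit remark about why the naive choice $\varphi/\chi_\delta$ fails, and your note on the positivity of the prefactor $\chi_\delta^{2k}$, are worthwhile clarifications but not a different argument.
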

\begin{proof}
    
 We prove that if $u$ is a viscosity subsolution to \eqref{HJ} then $v$ is a viscosity solution to \eqref{HJv}.
In fact, fix an arbitrary point $\x_0 \in \R^d$ and let $\varphi$ be a test function such that $v - \varphi$ has a strict maximum at $\x_0$. 
We have
\begin{equation}
	v(\x) - \varphi(\x) - [v(\x_0) - \varphi(\x_0)] < 0,
	\mbox{for all } \x \in \R^d \setminus \{\x_0\}.
	\label{2,3}
\end{equation}
Define 
\begin{equation}
	\phi(\x) =\frac{\varphi(\x) + (v(\x_0) - \varphi(\x_0))}{ \chi_\delta(\x)}
	\quad
	\mbox{for all } \x \in \R^d.
	\label{2.3}
\end{equation}
Then, $\phi \in C^1(\R^d)$.  
A simple algebra yields that for all $\x \in \R^d,$
\begin{align*}
	 \chi_\delta^{2k}(\x)& \Big[
		\lambda u(\x) + H(\x, \nabla \phi(\x))
	\Big]
	= \chi_\delta^{2k}(\x) \Big[
		\lambda  \frac{v(\x)}{\chi_\delta(\x)} + H\Big(\x, \nabla \frac{\varphi(\x) + (v(\x_0) - \varphi(\x_0))}{ \chi_\delta(\x)}\Big)
	\Big]
	\\
	&
	= \chi_\delta^{2k}(\x) \Big[
		\lambda \frac{v(\x)}{\chi_\delta(\x)} + H\Big(\x, \frac{ \chi_{\delta}(\x) \nabla \big(\varphi(\x) + (v(\x_0) - \varphi(\x_0))\big) - \big(\varphi(\x) + (v(\x_0) - \varphi(\x_0))\big) \nabla \chi_{\delta}(\x)}{\chi_\delta^2(\x)}\Big)
	\\
	&=\chi_\delta^{2k}(\x) \Big[
		\lambda \frac{v(\x)}{\chi_\delta(\x)} + H\Big(\x,  \frac{ \chi_{\delta}(\x) \nabla (\varphi(\x) ) - \big(\varphi(\x) + (v(\x_0) - \varphi(\x_0))\big) \nabla \chi_{\delta}(\x_0)}{\chi_\delta^2(\x)}\Big)\Big].
\end{align*}
In particular, when $\x = \x_0$, we have
\begin{align*}
	\chi_\delta^{2k}(\x_0) \Big[
		\lambda u(\x_0) & + H(\x_0, \nabla \phi(\x_0))
	\Big]
	\\
	&=
	\chi_\delta^{2k}(\x_0) \Big[
		\lambda \frac{v(\x_0)}{\chi_\delta(\x_0)} + H\Big(\x_0,  \frac{ \chi_{\delta}(\x_0) \nabla (\varphi(\x_0) ) - \big(\varphi(\x_0) + (v(\x_0) - \varphi(\x_0))\big) \nabla \chi_{\delta}(\x_0)}{\chi_\delta^2(\x)}\Big)\Big]
		\\
	&=\chi_\delta^{2k}(\x_0) \Big[
		\lambda \frac{v(\x_0)}{\chi_\delta(\x_0)} + H\Big(\x_0,  \frac{ \chi_{\delta}(\x_0) \nabla (\varphi(\x_0) ) - v(\x_0) \nabla \chi_{\delta}(\x_0)}{\chi_\delta^2(\x)}\Big)\Big]
\end{align*}
Hence, by \eqref{HJv},
\begin{equation}
	F(\x_0, v(\x_0), \nabla \varphi(\x_0)) = \chi_\delta^{2k}(\x_0) \Big[
		\lambda u(\x_0) + H(\x_0, \nabla \phi(\x_0))
	\Big] .
		 \label{2.5}
\end{equation}
Due to  \eqref{2,1} and \eqref{2.3}, for all $\x \in \R^d \setminus \{\x_0\}$,
\begin{align*}
	u(\x) - \phi(\x) 
	&=\frac{v(\x) - \big(\varphi(\x) + (v(\x_0) - \varphi(\x_0))\big)}{\chi_{\delta}(\x)}
	\\
	&= \frac{v(\x) - \varphi(\x) - (v(\x_0) - \varphi(\x_0))}{\chi_{\delta}(\x)} < 0.
\end{align*}
It is obvious that $u(\x_0) - \phi(\x_0) = 0.$ 
Hence, $u - \phi$ attains a strict maximum at $\x_0$. 
Since $u$ is a viscosity subsolution to \eqref{HJ},
\begin{equation}
	\chi_\delta^{2k}(\x_0) \Big[
		\lambda u(\x_0) + H(\x_0, \nabla \phi(\x_0))
	\Big]   
	\leq 0.
	\label{2.6}
	\end{equation}
	Combining \eqref{2.5} and \eqref{2.6}, we obtain
	\[
		F(\x_0, v(\x_0), \nabla \varphi(\x_0)) \leq 0.
	\]
Hence $v$ is a viscosity subsolution to \eqref{HJv}. 
We can repeat the proof above to show that if $u$ is a viscosity supersolution to \eqref{HJ} then $v$ is a viscosity supersolution to \eqref{HJv}. The reverse direction of Theorem \ref{thm1} can be proved in the same manner.
\end{proof}

\begin{Remark}
A direct consequence of Proposition \ref{thm1} is that we can compute the viscosity solution $u^*$ to \eqref{HJ} by finding the viscosity solution $v^*$ to  \eqref{HJv} and then setting $u^* = v^*/\chi_\delta$.
	Although the formula $u^* = v^*/\chi_\delta$ holds true for all $\x \in \R^d$, this formula is reliable only in the domain $G$ where $\chi_{\delta} > c > 0$. Outside $G$, the function $\chi_\delta$ is close to zero. In this case, the ``artificial" error in computation, due to discretization with positive step size, the presence of the viscosity term and regularization term, is magnified. 
\end{Remark}

It is well-known from the vanishing viscosity process  that $v^*$ can be approximated by the solution to
\begin{equation}
    -\epsilon_0 \Delta v_{\epsilon_0} + F(\x, v_{\epsilon_0}, \nabla v_{\epsilon_0}) = 0
    \quad
    \mbox{for all } \x \in \R^d.
    \label{2.9}
\end{equation}
In computation, it is inconvenient to compute the function $v_{\epsilon_0}$ on the whole space $\R^d$.
We only find $v_{\epsilon_0}$ in the bounded domain $\Omega$ on which $\chi_{\delta} > \delta.$
In order to solve PDEs of the form \eqref{2.9} on a bounded domain, we have to approximate the boundary conditions. 
By \eqref{chi} and \eqref{2,1}, for all $\x \in \partial \Omega$,
\begin{equation}
	|v(\x)| = \delta |u(\x)| < C\delta
	\label{2.1010}
\end{equation} 
and
\begin{equation}
	|\partial_{\nu} v(\x)| = |\chi_\delta(\x)\partial_{\nu} u(\x) + u(\x) \partial_{\nu}\chi_{\delta}(\x)|
	=|\chi_\delta(\x)\partial_{\nu} u(\x)| < C\delta.
	\label{2.1111}
\end{equation}
Here, we have used the fact that $\partial_{\nu}\chi_{\delta}(\x)|_{\partial \Omega} = 0$. 
	Since the functions $u|_{\partial \Omega}$ and $\partial_{\nu}u|_{\partial \Omega}$ are unknown, neither are $v|_{\partial \Omega}$ and $\partial_{\nu}v|_{\partial \Omega}$.
We are unable to compute the exact Cauchy information of $v$ on $\partial \Omega$. However, 
since $\delta$ is a small number, due to \eqref{2.1010} and \eqref{2.1111}, both $|v|$ and $|\nabla v|$ on $\partial \Omega$ are small. 
Therefore, we can impose the conditions that both $v_{\epsilon_0}|_{\partial \Omega}$ and $\nabla v_{\epsilon_0}|_{\partial \Omega}$ satisfy
\begin{equation}
    |v_{\epsilon_0}| < C\delta 
    \quad 
    \mbox{and } 
    |\nabla v_{\epsilon_0}| < C\delta
    \label{2.12}
\end{equation}
for all $\x \in \partial \Omega.$

	Since the exact boundary condition for $v_{\epsilon_0}$ cannot be retrieved, numerical methods to compute it is not yet developed. 
    Conventional methods compute a function $v_{\epsilon_0}$ that satisfies \eqref{2.9} and \eqref{2.12} are based on least squares optimization. 
    That means we minimize a cost functional and then set the minimizer, named as $v_{\rm min}^{\epsilon_0}$, as the computed solution. 
    A typical example of such a functional is
    \begin{equation}
        v \mapsto J(v) = \int_{\Omega}|-\epsilon_0 \Delta v + F(\x, v, \nabla v)|^2 d\x
        + \int_{\partial \Omega} |v|^2d\x
        + \int_{\partial \Omega} |\nabla v|^2d\sigma(\x)
        + \mbox{a regularization term}.
        \label{2.131313}
    \end{equation}
This approach is effective in many cases. It is widely used in the scientific community.
However, it has several drawbacks. 
The most important drawback is that finding the global minimizer $v_{\rm min}^{\epsilon_0}$ is extremely challenging unless a  good initial guess is given.
This is because the functional $J$  might not be convex and might have multiple local minima. 
The second drawback is that,  in general,  the distance between the true solution $v_{\epsilon_0}$ to \eqref{HJv} and the computed solution $v_{\rm min}^{\epsilon_0}$ is not known.
In this paper, we generalize the convexification method in \cite{KhoaKlibanovLoc:SIAMImaging2020, KlibanovNguyenTran:JCP2022} to compute the ``best fit" solution to \eqref{2.9} and \eqref{2.12}.
By convexification, we mean that we let a Carleman weight function involve in the functional $J$, defined in \eqref{2.131313}. 
The presence of the Carleman weight function remove both significant drawbacks of the least squares optimization approach above.
As mentioned in Section \ref{sec1}, the idea of using Carleman weight function to convexify the functional $J$ was originally introduce in \cite{KlibanovIoussoupova:SMA1995} and then was investigated intensively by our research group, see e.g., \cite{KlibanovNik:ra2017, KhoaKlibanovLoc:SIAMImaging2020, KlibanovNguyenTran:JCP2022,  LeNguyen:JSC2022}.

\section{A piece-wise Carleman estimate}\label{sec_Car}

The key tool for us to rigorously prove the convexifying phenomenon is the Carleman estimate established in this section. 
Let $\Omega$ be a bounded domain in $\R^d$ with smooth boundary. 
Let $A: \overline \Omega \to \R^{d \times d}$ be a $d \times d$ matrix valued function in the class $C^2$. 
Assume that 
\begin{enumerate} 
\item $A$ is symmetric; i.e., $A^{\rm T} = A$ or equivalently $a_{ij} = a_{ji}$ for all $1 \leq i, j \leq d$, where $a_{ij}$ is the entry on row $i$ and column $j$ of $A$;
\item $A$ is positive definite; i.e., there exists a positive number $\Lambda$ such that
\begin{equation}
   \Lambda^{-1} |\xi|^2 \leq A(\x) \xi \cdot \xi \leq \Lambda |\xi|^2 \quad \mbox{for all } \x \in \overline \Omega, \xi \in \R^d.
    \label{ellipic_matrix}
\end{equation}
\end{enumerate}
Let $\x_0$ be a point in $\R^d \setminus \Omega.$ For each $\x \in \R^d$, define 
\begin{equation}
    r(\x) = |\x - \x_0|
    \quad \mbox{for all } \x \in \overline \Omega.
\end{equation}
We have the theorem.
\begin{Theorem}
    Let $\lambda > 0$ and $u \in C^2(\overline \Omega).$
    Then, there exists a positive constant $\beta_0$ depending only on $\|A\|_{C_1(\overline \Omega)}$ and $\Lambda$ 
    such that if $\beta \geq \beta_0$ and $\lambda \geq \lambda_0 = 2R^\beta$, where $R = \max_{\x \in \overline \Omega}\{|\x - \x_0|\}$, then
    \begin{equation}
        r^{\beta + 2} e^{2\lambda r^{-\beta}} |\Div(A\nabla u)|^2  
    \geq C\Big[
        \Div (U) + \lambda^3\beta^4e^{2\lambda r^{-\beta}} r^{-2\beta - 2} |u|^2
        + \lambda \beta e^{2\lambda r^{-\beta}} |\nabla u|^2 
    \Big].
    \label{CarEst}
    \end{equation}
    Here, $U$ is  a vector valued function satisfying
    \begin{equation}
        |U| \leq Ce^{2\lambda r^{-\beta}}(\lambda^3 \beta^3 r^{-2\beta - 2}|u|^2 + \lambda \beta |\nabla u|^2)
        \label{divU}
    \end{equation} and $C$ is a constant depending only on $\x_0,$ $\Omega$, $\|A\|_{C^1(\overline \Omega)}$, $\Lambda$ and $d$. 
    \label{thmCarpointwise}
\end{Theorem}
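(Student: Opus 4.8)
The plan is to establish the pointwise Carleman estimate \eqref{CarEst} by the standard Bukhgeim–Klibanov scheme adapted to the weight $\psi(\x) = r^{-\beta}$, which is the natural choice for the operator $\Div(A\nabla\cdot)$ on a domain not containing the singular point $\x_0$. First I would introduce the substitution $w = e^{\lambda r^{-\beta}} u$, so that $u = e^{-\lambda r^{-\beta}} w$, and expand $\Div(A\nabla u)$ in terms of $w$ and its first and second derivatives, collecting the terms that carry the weight $e^{-\lambda r^{-\beta}}$. This produces an identity of the form $e^{\lambda r^{-\beta}}\Div(A\nabla u) = L_1 w + L_2 w$, where $L_1$ is (roughly) the self-adjoint part containing the second-order term and the leading power of $\lambda$, and $L_2$ is the skew part; the precise splitting is chosen so that the cross term $2(L_1 w)(L_2 w)$ is, up to a divergence $\Div(U)$, a positive combination of $|\nabla w|^2$ and $|w|^2$ with the advertised powers of $\lambda$ and $\beta$. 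Squaring, $|e^{\lambda r^{-\beta}}\Div(A\nabla u)|^2 \ge 2(L_1w)(L_2w)$, and after multiplying through by $r^{\beta+2}$ and translating back from $w$ to $u$ (each derivative of $w$ contributing terms in $\nabla u$, $u$, and powers of $\lambda\beta r^{-\beta-1}$ from differentiating the exponent) one recovers \eqref{CarEst} and the bound \eqref{divU} on $U$.

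The core computation is the lower bound for the cross term $2(L_1 w)(L_2 w)$. I would compute $\nabla(r^{-\beta}) = -\beta r^{-\beta-2}(\x-\x_0)$ and $\nabla^2(r^{-\beta})$, so that $A\nabla(r^{-\beta})\cdot\nabla(r^{-\beta}) = \beta^2 r^{-2\beta-4} A(\x-\x_0)\cdot(\x-\x_0)$, which by the ellipticity \eqref{ellipic_matrix} is bounded below by $\Lambda^{-1}\beta^2 r^{-2\beta-2}$. The dominant positive contribution, of order $\lambda^3\beta^4 r^{-2\beta-2}|w|^2$, comes from pairing the $\lambda^2|\nabla\psi|_A^2 w$ term in $L_1$ with the $\lambda\, (\text{first-order}) w$ term in $L_2$; after integration by parts (i.e. extracting a divergence) the ``good'' coefficient is $\lambda^3$ times derivatives of $|\nabla\psi|_A^2$, and the key point is that these derivatives scale like $\beta^4 r^{-2\beta-2}$ and, for $\beta$ large, the leading-order-in-$\beta$ piece is strictly positive and dominates the lower-order-in-$\beta$ corrections that involve $\|A\|_{C^1}$. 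Similarly the $\lambda\beta|\nabla w|^2$ term arises from pairing the second-order part of $L_1$ with the first-order part of $L_2$, again modulo a divergence and modulo absorbing $\|A\|_{C^1}$, $\|A\|_{C^2}$ error terms once $\beta \ge \beta_0$. The hypothesis $\lambda \ge 2R^\beta$ guarantees $\lambda r^{-\beta} \ge \lambda R^{-\beta} \ge \lambda^{1/2}\cdot(\lambda^{1/2} R^{-\beta}) \gtrsim 1$ uniformly, which is what lets the $\lambda^3$ term dominate lower powers of $\lambda$ and absorb cross terms of order $\lambda^2$ and $\lambda\beta^2$; equivalently one can say $r^{-\beta}$ is bounded below on $\overline\Omega$ by a positive constant, so $e^{2\lambda r^{-\beta}} \ge 1$ and the weight never degenerates.

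The main obstacle I anticipate is bookkeeping: correctly identifying which terms go into $L_1$ versus $L_2$ so that \emph{all} the cross terms organize into either (i) a divergence $\Div(U)$ with $U$ obeying \eqref{divU}, or (ii) manifestly nonnegative terms with the right weights, or (iii) error terms of strictly lower order in $\beta$ (or $\lambda$) that can be absorbed by choosing $\beta_0$ large and using $\lambda \ge 2R^\beta$. In particular one must be careful that the non-constant, $C^2$ matrix $A$ introduces terms like $(\Div A)\cdot\nabla w$ and $(\nabla^2 \psi$ contracted with $A)$ that are not present in the constant-coefficient Laplacian case; these are controlled by $\|A\|_{C^1(\overline\Omega)}$ and $\Lambda$, which is exactly why $\beta_0$ is allowed to depend on those quantities. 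Once the cross-term identity is pinned down, extracting $U$ (as a sum of $w\nabla w$-type and $|w|^2$-type vector fields times derivatives of $\psi$) and verifying \eqref{divU} is routine. I would present the argument by: (1) stating the conjugation identity; (2) the explicit algebraic splitting $L_1, L_2$; (3) the cross-term computation with the divergence extracted; (4) the $\beta_0$-threshold absorption; (5) substituting back $w = e^{\lambda r^{-\beta}}u$ and multiplying by $r^{\beta+2}$ to land on \eqref{CarEst}.
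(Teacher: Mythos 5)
Your overall scheme---conjugating with $w=e^{\lambda r^{-\beta}}u$, bounding the square below by twice the cross product of the ``symmetric'' and ``antisymmetric'' parts, extracting divergences, and absorbing $\|A\|_{C^1}$-error terms by taking $\beta_0$ large and using $\lambda\ge 2R^{\beta}$ so that $\lambda r^{-\beta}\ge 2$ on $\overline\Omega$---is exactly the paper's Steps~1--4, and your account of where the $\lambda^3\beta^4 r^{-2\beta-2}|u|^2$ term comes from (the zeroth-order part of the conjugated operator paired with the first-order transport term, with the $\lambda\beta r^{-2\beta-4}$ piece of $\Div(A\nabla e^{-\lambda r^{-\beta}})$ dominating the $(\beta+2)r^{-\beta-4}$ piece precisely because $\lambda R^{-\beta}\ge 2$) matches the paper's computation.

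There is, however, one genuine gap: your claimed mechanism for the positive gradient term fails for this weight. You assert that the $\lambda\beta|\nabla w|^2$ term ``arises from pairing the second-order part of $L_1$ with the first-order part of $L_2$, modulo a divergence.'' But the Hessian of $\psi=r^{-\beta}$ equals $-\beta r^{-\beta-2}\,\mathrm{Id}+\beta(\beta+2)r^{-\beta-4}(\x-\x_0)\otimes(\x-\x_0)$, which is \emph{negative} in the directions tangential to the spheres $\{r=\text{const}\}$; the weight is not pseudoconvex, so that pairing produces a gradient term with no sign. Indeed, in the paper this pairing yields only $\Div(V_1+V_2)-C_1|\nabla v|^2$ (for $A=\mathrm{Id}$ the residual is exactly $-d\,|\nabla v|^2$), i.e.\ a gradient contribution of the \emph{wrong} sign, and the same is true after converting back to $u$. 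The missing idea is the paper's Step~5: one separately expands the weighted energy identity $e^{2\lambda r^{-\beta}}u\,\Div(A\nabla u)=\Div\big(e^{2\lambda r^{-\beta}}uA\nabla u\big)-e^{2\lambda r^{-\beta}}A\nabla u\cdot\nabla u+2\lambda\beta r^{-\beta-2}e^{2\lambda r^{-\beta}}u\,A\nabla u\cdot(\x-\x_0)$, uses \eqref{ellipic_matrix} and Cauchy--Schwarz to turn this into a lower bound for $r^{\beta+2}e^{2\lambda r^{-\beta}}|\Div(A\nabla u)|^2/(\lambda\beta)$ featuring $+\tfrac{1}{2\Lambda}e^{2\lambda r^{-\beta}}|\nabla u|^2$ at the cost of $-C\lambda^2\beta^2 e^{2\lambda r^{-\beta}}r^{-2\beta-2}|u|^2$, and then adds a large fixed multiple of this inequality to the cross-term estimate. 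The penalty is absorbed by the positive $\lambda^2\beta^3 r^{-2\beta-2}|u|^2$ term because it carries one fewer power of $\beta$ (which is precisely why $\beta_0$ must be large), and only after this addition does one multiply through by $\lambda\beta$ to arrive at \eqref{CarEst} and \eqref{divU}. Without this extra Caccioppoli-type ingredient your argument cannot produce the $\lambda\beta e^{2\lambda r^{-\beta}}|\nabla u|^2$ term.
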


\begin{proof}[Proof of Theorem \ref{thmCarpointwise} ]
In the proof, we denote by $C_i$, $i \in \{1, 2,  \dots\}$, positive constants depending only on $\|A\|_{C^1(\overline \Omega)},$ $\x_0, $ $\Omega$, $\Lambda$ and $d.$
We split the proof into several steps.

\noindent{\bf Step 1}.
For $\x \in \Omega$, recall $r = |\x - \x_0|.$
Set
\begin{equation}
    v = e^{\lambda r^{-\beta}}u
    \quad
    \mbox{equivalently}
    \quad
    u = e^{-\lambda r^{-\beta}}v.
    \label{2.10}
\end{equation}
By the product rule in differentiation and the symmetry of $A$, we have
\begin{align*}
    \Div (A\nabla u) &= \Div (A \nabla (e^{-\lambda r^{-\beta}} v))
    %
    \\
    &= 2A\nabla v \cdot \nabla (e^{-\lambda r^{-\beta}})
    + e^{-\lambda r^{-\beta}}\Div (A \nabla v)
    + v \Div (A\nabla e^{-\lambda r^{-\beta}})\\
    &= 2\lambda \beta r^{-\beta - 2} e^{-\lambda r^{-\beta}} A\nabla v \cdot (\x - \x_0)
    + e^{-\lambda r^{-\beta}}\Div (A \nabla v)
    + v \Div (A\nabla e^{-\lambda r^{-\beta}}).
\end{align*}
Using the inequality $(a + b + c)^2 \geq 2a(b + c),$ we have for all $\x \in \Omega$
\begin{equation}
    \frac{r^{\beta + 2} e^{2\lambda r^{-\beta}}|\Div (A\nabla u)|^2}{2\lambda \beta} 
    \geq 
    2 (\x - \x_0) \cdot A\nabla v 
    \Div (A \nabla v)
     + 2 e^{\lambda r^{-\beta}} [(\x - \x_0) \cdot A\nabla v ]
     v \Div (A\nabla e^{\lambda r^{-\beta}}).
     \label{2.11}
\end{equation}
Denote by
\begin{align}
    I_1 &= 2 (\x - \x_0) \cdot A\nabla v \Div (A \nabla v) \label{I1}
    \\
    I_2 &= 2e^{\lambda r^{-\beta}} (\x - \x_0) \cdot A\nabla (|v|^2) 
      \Div (A\nabla e^{\lambda r^{-\beta}}). \label{I2}
\end{align}
Due to  \eqref{I1} and \eqref{I2}, we rewrite \eqref{2.11} as
\begin{equation}
    \frac{r^{\beta + 2} e^{2\lambda r^{-\beta}}|\Div (A\nabla u)|^2}{2\lambda \beta} \geq I_1 + I_2
    \label{2.141414}
\end{equation}
for all $\x \in \Omega.$
We next estimate $I_1$ and $I_2.$

\noindent {\bf Step 2.}  In this step, we estimate $I_1$.
By the product rule in differentiation $f \Div F = \Div (f F) - \nabla f \cdot F$ for all  scalar valued function $f$ and vector valued function $F$, we have
\begin{align*}
    I_1 &= 2(\x - \x_0) \cdot A\nabla v 
    \Div (A \nabla v)
    \\
    &=2 \Div \big( [(\x - \x_0) \cdot A\nabla v]   A \nabla v\big) - 2 \nabla ((\x - \x_0) \cdot A\nabla v  ) \cdot (A \nabla v)
\end{align*}
for all $\x \in \Omega.$
Thus,
\begin{equation}
    I_1 = \Div V_1 - 2 \nabla ((\x - \x_0) \cdot A\nabla v)  \cdot (A \nabla v)
    \label{c_2_2.9}
\end{equation}
where $V_1$ is the vector defined by
\begin{equation} 
    V_1 = 2[(\x - \x_0) 
    \cdot A\nabla v ]  A \nabla v.
    \label{V1}
\end{equation}
Using the symmetry of $A = (a_{ij})_{j = 1}^d$, we have
\begin{align}
    &\frac{\partial}{\partial x_i} ((\x - \x_0)\cdot A\nabla v  ) 
    = \frac{\partial}{\partial x_i} (A(\x - \x_0)\cdot \nabla v  ) 
     = \frac{\partial}{\partial x_i} \Big(
        \sum_{k, j = 1}^d (x_j - (x_0)_j) a_{kj}  \frac{\partial v}{\partial x_k}  
     \Big) \notag
    \\
    &\hspace{0.05cm} = \sum_{k, j = 1}^d \Big[
        (x_j - (x_0)_j) a_{kj}  \frac{\partial^2 v}{\partial x_k \partial x_i} 
        + a_{kj} \delta_{ij}  \frac{\partial v}{\partial x_k}
        + (x_j - (x_0)_j) \frac{\partial a_{kj}}{\partial x_i} \frac{\partial v}{\partial x_k}
    \Big].
    \label{c_2_2.11}
\end{align}
Here,
$
    \delta_{ij} = \left\{
        \begin{array}{ll}
             1&  i = j\\
             0&  i \not = j
        \end{array}
    \right.
$
is called the Kronecker delta and
$x_j$ and $(x_0)_j$ are the $j^{\rm}$ entries of $\x$ and $\x_0$ respectively.
By writing
\begin{multline*}
    2\sum_{i, j,k,l = 1}^d (x_j - (x_0)_j) a_{kj}  \frac{\partial^2 v}{\partial x_k \partial x_i}a_{il} \frac{\partial v}{\partial l} 
    = \sum_{i, j,k,l = 1}^d (x_j - (x_0)_j) a_{kj}  \frac{\partial^2 v}{\partial x_k \partial x_i}a_{il} \frac{\partial v}{\partial l}\\
    + \sum_{i, j,k,l = 1}^d (x_j - (x_0)_j) a_{kj}  \frac{\partial^2 v}{\partial x_k \partial x_i}a_{il} \frac{\partial v}{\partial l},
\end{multline*}
and by interchanging the roles of the indices $i$ and $l$ in the second sum, we obtain
\begin{multline}
    2\sum_{i, j,k,l = 1}^d (x_j - (x_0)_j) a_{kj}  \frac{\partial^2 v}{\partial x_k \partial x_i}a_{il} \frac{\partial v}{\partial l}
   = \sum_{i, j,k,l = 1}^d (x_j - (x_0)_j) a_{kj}  \frac{\partial^2 v}{\partial x_k \partial x_i}a_{il} \frac{\partial v}{\partial l}\\
   + \sum_{i, j,k,l = 1}^d (x_j - (x_0)_j) a_{kj}  \frac{\partial^2 v}{\partial x_k \partial x_l}a_{li} \frac{\partial v}{\partial i}
   \label{2.1717}
\end{multline}
for all $\x \in \Omega.$
Since $a_{il} = a_{li},$ it follows from \eqref{2.1717} that
\begin{align}
     2\sum_{i, j,k,l = 1}^d &(x_j - (x_0)_j) a_{kj}  \frac{\partial^2 v}{\partial x_k \partial x_i}a_{il} \frac{\partial v}{\partial l} 
    = 
    \sum_{i, j,k,l = 1}^d (x_j - (x_0)_j) a_{kj} a_{il} \Big[  \frac{\partial^2 v}{\partial x_k \partial x_i} \frac{\partial v}{\partial l}
    + \frac{\partial^2 v}{\partial x_k \partial x_l} \frac{\partial v}{\partial i}
    \Big] \notag
    \\
    &= \sum_{i, j,k,l = 1}^d (x_j - (x_0)_j) a_{kj} a_{il} \frac{\partial }{\partial x_k}\Big(
        \frac{\partial v}{\partial x_i} \frac{\partial v}{\partial x_l}
    \Big) \notag
    \\
    &=
    \sum_{i, j,k,l = 1}^d \frac{\partial }{\partial x_k}\Big( (x_j - (x_0)_j) a_{kj} a_{il} 
        \frac{\partial v}{\partial x_i} \frac{\partial v}{\partial x_l}
    \Big) 
- \sum_{i, j,k, l = 1}^d \frac{\partial}{\partial x_k}\Big((x_j - (x_0)_j) a_{kj} a_{il} \Big)
        \frac{\partial v}{\partial x_i} \frac{\partial v}{\partial x_l}.
    \label{c_2_2.12p}
\end{align}
The first sum of in the right hand side of \eqref{c_2_2.12p} can be rewritten as 
\begin{equation}
    \sum_{i, j,k,l = 1}^d \frac{\partial }{\partial x_k}\Big( (x_j - (x_0)_j) a_{kj} a_{il} 
        \frac{\partial v}{\partial x_i} \frac{\partial v}{\partial x_l} \Big)
        =
        \sum_{k, j = 1}^d  \frac{\partial }{\partial x_k}\Big( (x_j - (x_0)_j) a_{kj} \sum_{i,l = 1}^d a_{il} 
        \frac{\partial v}{\partial x_i} \frac{\partial v}{\partial x_l}\Big)
        = \Div (V_2)
        \label{2.1919}
\end{equation}
where
\begin{equation}
    V_2 = A (\x - \x_0) (A \nabla v \cdot \nabla v).
    \label{V2}
\end{equation}
By \eqref{c_2_2.12p}, \eqref{2.1919} and \eqref{V2}, we have proved that
\begin{equation}
     2\sum_{i, j,k,l = 1}^d (x_j - (x_0)_j) a_{kj}  \frac{\partial^2 v}{\partial x_k \partial x_i}a_{il} \frac{\partial v}{\partial l}
     = \Div(V_2) -  \sum_{i, j,k, l = 1}^d \frac{\partial}{\partial x_k}\Big((x_j - (x_0)_j) a_{kj} a_{il} \Big)
        \frac{\partial v}{\partial x_i} \frac{\partial v}{\partial x_l}.
        \label{c2_2.16}
\end{equation}
We are now at the position to estimate $I_1$.  Using \eqref{c_2_2.9}, \eqref{V1}, \eqref{c_2_2.11}, \eqref{V2} and \eqref{c2_2.16}, we obtain
\begin{equation}
    I_1 \geq \Div(V_1 + V_2) - C_1 |\nabla v|^2
    \label{2.23}
\end{equation}
 where 
 \[
 C_1 = 2\max_{\x \in \overline \Omega}\Big\{\Big|\frac{\partial}{\partial x_k}\Big((x_j - (x_0)_j) a_{kj} a_{il}\Big) \Big|\Big\}
 \] is a constant depending only on $A$, $\x_0$, $\Omega$, and $d$. 

\noindent {\bf Step 3.} We now estimate $I_2.$
A simple computation yields
\begin{align*}
    \Div(A\nabla e^{-\lambda r^{-\beta}})   
    &= \lambda \beta  \Div(r^{-\beta - 2} e^{-\lambda r^{-\beta}}  A (\x - \x_0) )\\
    &= \lambda \beta \Big[
        \nabla (r^{-\beta - 2} e^{-\lambda r^{-\beta}}) \cdot A (\x - \x_0) 
    + r^{-\beta - 2} e^{-\lambda r^{-\beta}} \Div(A(\x - \x_0))
    \Big]
\end{align*}
for all $\x \in \Omega$. Thus,
\begin{multline}
    \Div(A\nabla e^{-\lambda r^{-\beta}})
    = 
    \lambda \beta  e^{-\lambda r^{-\beta}} \Big[
        \big( 
            -(\beta + 2) r^{-\beta-4} + \lambda \beta r^{-2\beta -4} 
        \big) (\x - \x_0)
        \cdot A (\x - \x_0) 
        \\
     +   r^{-\beta - 2} \Div(A(\x - \x_0))\Big]
     \label{c2_2.18}
\end{multline}
for all $\x \in \Omega.$
Since $A$ is symmetric, recalling \eqref{I2} and using \eqref{c2_2.18}, we can write
\[
    I_2 = e^{\lambda r^{-\beta}} A(\x - \x_0) \cdot \nabla (|v|^2) 
      \Div (A\nabla e^{\lambda r^{-\beta}}).
\]
Hence,
\begin{equation*}
    I_2 = \lambda \beta  A(\x - \x_0) \cdot \nabla (|v|^2)
       \Big[
        \big( 
            -(\beta + 2) r^{-\beta-4} + \lambda \beta r^{-2\beta -4} 
        \big) (\x - \x_0)
        \cdot A (\x - \x_0) 
    +   r^{-\beta - 2}\Div(A(\x - \x_0))\Big].
\end{equation*}
Thus,
\begin{equation}
    I_2 =  \Div(V_3 ) - \lambda \beta|v|^2 \Div (P)
    \label{2.19}
\end{equation}
where
\begin{equation}
    V_3 = \lambda \beta |v|^2 P 
    \label{V3}
\end{equation}
and
\begin{equation}
    P =     \Big[
        \big( 
            -(\beta + 2) r^{-\beta-4} + \lambda \beta r^{-2\beta -4} 
        \big) (\x - \x_0)
        \cdot A (\x - \x_0) 
     +   r^{-\beta - 2} \Div(A(\x - \x_0))\Big]
       A(\x - \x_0).
       \label{P}
\end{equation}
We estimate the second term in the right hand side of \eqref{2.19}. We write
\begin{equation}
    -\lambda \beta |v|^2 \Div (P) = -\lambda \beta |v|^2 \Div(P1 + P_2 + P_3)
    \label{2.27}
\end{equation}
where
\begin{align*}
    P_1 &= -(\beta + 2)r^{-\beta - 4} (\x - \x_0) \cdot A(\x - \x_0)A(\x - \x_0),\\
    P_2 &= \lambda \beta r^{-2\beta - 4}(\x - \x_0) \cdot A(\x - \x_0)A(\x - \x_0),\\
    P_3 &= r^{-\beta - 2} \Div (A(\x - x_0)) A(\x - \x_0).
\end{align*}
Simple computations yield
\begin{align}
   - \Div (P_1) &= (\beta + 2)\Big[\nabla (r^{-\beta - 4}) \cdot[ (\x - \x_0) \cdot A(\x - \x_0)A(\x - \x_0)] \notag
   \\
   &\hspace{1cm}+ r^{-\beta - 4}\Div((\x - \x_0) \cdot A(\x - \x_0)A(\x - \x_0)) \notag
   \\
   &= (\beta + 2)\Big[-(\beta + 4) r^{-\beta - 6} (\x - \x_0) \cdot[ (\x - \x_0) \cdot A(\x - \x_0)A(\x - \x_0)] \notag
  \\
  &\hspace{1cm}
   + r^{-\beta - 4}\Div((\x - \x_0) \cdot A(\x - \x_0)A(\x - \x_0))
    \Big].
    \label{2.2929}
\end{align}
Recalling \eqref{ellipic_matrix}, we have
\begin{equation}
    (\x - \x_0)\cdot A(\x - \x_0) \geq \Lambda^{-1}
|\x - \x_0|^2 = \Lambda^{-1} r^2.
\label{2.3030}
\end{equation}
It follows from \eqref{2.2929} and \eqref{2.3030} that
\begin{equation}
    - \Div (P_1) \geq -(\beta + 2)(\beta + 4) \Lambda^{-1} r^{-\beta - 2} - C_2 r^{-\beta - 2} \geq -C_3 \beta^2 r^{-\beta - 2}
    \label{2.28}
\end{equation}
where \[C_2 = \max_{\x \in \overline \Omega}\Big\{r^{-2}|\Div((\x - \x_0) \cdot A(\x - \x_0)A(\x - \x_0))|\Big\}\]
and $C_3$ depends only on $\Omega$, $\x_0,$ and $\Lambda$.
We next estimate $-\Div(P_2).$ We have
\begin{align*}
    -\Div(P_2) &= - \lambda \beta\Big[ \nabla (r^{-2\beta - 4})\cdot [(\x - \x_0) \cdot A(\x - \x_0)A(\x - \x_0)]
    \\
    &\hspace{2cm}+ 
    r^{-2\beta - 4} \Div[(\x - \x_0) \cdot A(\x - \x_0)A(\x - \x_0)]
    \Big]
    \\
    &= -\lambda \beta \Big[ 
        (-2\beta - 4) r^{-2\beta - 6} [(\x - \x_0)\cdot A(\x - \x_0)]^2
        \\
        &\hspace{2cm}+ 
    r^{-2\beta - 4} \Div[(\x - \x_0) \cdot A(\x - \x_0)A(\x - \x_0)]
    \Big].
\end{align*}
Using \eqref{2.3030}, we have
\begin{equation}
    -\Div(P_2) \geq C_4\lambda \beta^2 r^{-2\beta - 2}
    \label{2.29}
\end{equation}
where $C_4$ depends only on $\Omega$, $\x_0,$ and $\Lambda$.
On the other hand,
\begin{align*}
    -\Div(P_3) &= -\Big[\nabla (r^{-\beta - 2})\cdot[ \Div(A(\x - \x_0)) A(\x - \x_0)] 
    \\
    &\hspace{3cm}
    + r^{-\beta - 2} \Div(\Div(A(\x - \x_0)) A(\x - \x_0))\Big]\\
    &= (\beta + 2) r^{-\beta - 4} (\x - \x_0)\cdot[ \Div(A(\x - \x_0)) A(\x - \x_0)] 
    \\
    &\hspace{3cm}
    - r^{-\beta - 2} \Div(\Div(A(\x - \x_0)) A(\x - \x_0)).
\end{align*}
Hence,
\begin{equation}
    -\Div(P_3) \geq -C_5 \beta r^{-\beta - 2},
    \label{2.30}
\end{equation}
where $C_5$ depends only on $\Omega$, $\x_0,$ and $\Lambda$.
Combining \eqref{2.27}, \eqref{2.28}, \eqref{2.29} and \eqref{2.30}, we have
\begin{equation}
    -\lambda \beta |v|^2 \Div(P) \geq C_6\lambda^2 \beta^3 r^{-2\beta - 2} |v|^2,
    \label{2.31}
\end{equation}
where $C_6$ depends only on $\Omega$, $\x_0,$ and $\Lambda$.
Here, we have used the fact that $\lambda R^{-\beta} \geq 2.$
Due to \eqref{2.19} and \eqref{2.31}, we obtain
\begin{equation}
    I_2 \geq \Div(V_3)  - C_6\lambda^2 \beta^3 r^{-2\beta - 2} |v|^2.
    \label{2.32}
\end{equation}
Step 3 is complete. 

\noindent {\bf Step 4.}
 Combining the estimates \eqref{2.141414},
\eqref{2.23} and \eqref{2.32}, we get
\begin{equation}
    \frac{r^{\beta + 2} e^{2\lambda r^{-\beta}}|\Div (A\nabla u)|^2}{2\lambda \beta} 
    \\
    \geq \Div(V_1 + V_2 + V_3) 
    + C_6\lambda^2 \beta^3 r^{-2\beta - 2}|v|^2 - C_1 |\nabla v|^2 
    \label{2.34}
\end{equation}
for all $\x \in \Omega.$
Recall from \eqref{2.10} that $v = e^{\lambda r^{\beta}} u$. By standard rule in differentiation, we have
\begin{equation*}
    \nabla v = e^{\lambda r^{-\beta}}(-\lambda \beta u r^{-\beta - 2} (\x - \x_0)  + \nabla u).
\end{equation*}
Hence,
\begin{equation}
    |\nabla v|^2 \geq -C_7 e^{2\lambda r^{-\beta}} (\lambda^2 \beta^2 r^{-2\beta - 2}|u|^2 + |\nabla u|^2),
    \label{2.35}
\end{equation}
where $C_6$ depends only on $\Omega$ and $\x_0.$
Combining \eqref{2.10}, \eqref{2.34} and \eqref{2.35}, we obtain 
\begin{equation}
    \frac{r^{\beta + 2} e^{2\lambda r^{-\beta}}|\Div (A\nabla u)|^2}{2\lambda \beta} \geq \Div(V_1 + V_2 + V_3)
    + C_8\lambda^2 \beta^3 e^{2\lambda r^{-\beta}} r^{-2\beta - 2}|u|^2 - C_9 e^{2\lambda r^{-\beta}} |\nabla u|^2 
    \label{2.3838}
\end{equation}
for all $\x \in \Omega,$
where $C_8$ and $C_9$ depend only on $\Omega$, $\x_0,$ and $\Lambda$.

\noindent {\bf Step 5.} 
We have
\begin{align}
    e^{2\lambda r^{-\beta}} u \Div(A\nabla u) 
    &=  \Div( e^{2\lambda r^{-\beta}} u A\nabla u) - \nabla [e^{2\lambda r^{-\beta}} u] \cdot (A\nabla u)
    \notag\\
    &= \Div( U_1) - e^{2\lambda r^{-\beta}}\Big[
        \nabla u - 2\lambda \beta r^{-\beta - 2}u (\x - \x_0)\Big] A\nabla u \label{2.38}
\end{align}
where 
\begin{equation}
    U_1 = e^{2\lambda r^{-\beta}} u A\nabla u. 
    \label{U1}
\end{equation}
Since
\[
    2\lambda \beta r^{-\beta - 2} u A\nabla u \cdot (\x - \x_0) \leq \frac{1}{2\Lambda} |\nabla u|^2 + 8C_{10} \lambda^2\beta^2 \Lambda r^{-2\beta - 2} |u|^2,
\]
using \eqref{ellipic_matrix} with $\xi = \nabla u$, \eqref{2.38} and the inequality $2ab \leq a^2 + b^2$, we have
\begin{equation}
    e^{2\lambda r^{-\beta}} u \Div(A\nabla u)  
    \leq \Div(U_1) + C_{11}\lambda^2\beta^2 e^{2\lambda r^{-\beta}} r^{-2\beta - 2}|u|^2 - \frac{1}{2\Lambda} e^{2\lambda r^{-\beta}} |\nabla u|^2. 
\end{equation}
Here, $C_{10}$ and $C_{11}$ depend only on $\Omega$, $\x_0,$ and $\Lambda$.

On the other hand,
since
\[
|u\Div (A\nabla u)| \leq \lambda^2 \beta |u|^2 r^{-2\beta - 2} + \frac{4}{\lambda \beta} |\Div(A\nabla u)|^2 r^{\beta + 2},
\]
we have
\begin{align}
    \frac{4r^{\beta + 2} e^{2\lambda r^{-\beta}} |\Div(A\nabla u)|^2}{\lambda \beta} 
    &\geq e^{2\lambda r^{-\beta}} |u \Div(A\nabla u)| - \lambda^2 \beta r^{-2\beta - 2}e^{2\lambda r^{-\beta}}|u|^2 \notag\\
     &\geq -\Div (U_1) - C_{12}\lambda^2 \beta^2 e^{2\lambda r^{-\beta}} r^{-2\beta - 2} |u|^2 
    + \frac{1}{2\Lambda}e^{2\lambda r^{-\beta}}|\nabla u|^2,
    \label{2.42}
\end{align}
where $C_{12}$ depend only on $\Omega$, $\x_0,$ and $\Lambda$.
Multiply both sides of \eqref{2.42} by $4C_1 \Lambda$ and then add the resulting equation into \eqref{2.3838}. We obtain
\begin{equation}
    \frac{r^{\beta + 2} e^{2\lambda r^{-\beta}} |\Div(A\nabla u)|^2}{\lambda \beta} 
    \geq C\Big[
        \Div (U_2) + \lambda^2\beta^3e^{2\lambda r^{-\beta}} r^{-2\beta - 2} |u|^2 + e^{2\lambda r^{-\beta}} |\nabla u|^2 
    \Big]
    \label{2.43}
\end{equation}
    where
    \[
        U_2= -U_1 + V_1 + V_2 + V_3.
    \]
    Due to \eqref{V1}, \eqref{V2}, \eqref{V3}, \eqref{P} and \eqref{U1}, it is obvious that 
    \[
        |U_2| \leq Ce^{2\lambda r^{-\beta}}(\lambda^2 \beta^2 r^{-2\beta - 2}|u|^2 + |\nabla u|^2).
    \]
    Letting $U = \lambda \beta U_2$, we obtain \eqref{CarEst}.
The proof is complete.
\end{proof}

\begin{Corollary}
Fix $\beta \geq \beta_0$. There exists a number $\lambda_0$ depending only on $\Lambda$, $\|A\|_{C^2(\overline \Omega)}$, $\x_0$, $\Omega$, $R$, $\beta$ and $d$ such that for all $\lambda \geq \lambda_0$,
\begin{equation}
         e^{2\lambda r^{-\beta}} |\Div(A\nabla u)|^2  
    \geq C\Big[
        \Div (U) + \lambda^3 e^{2\lambda r^{-\beta}}  |u|^2 + \lambda  e^{2\lambda r^{-\beta}} |\nabla u|^2 
    \Big]
    \label{CarEst1}
    \end{equation}
    where $C$ is a constant depending only on $\Lambda$, $\|A\|_{C^2(\overline \Omega)}$, $\x_0$, $\Omega$, $R$, $\beta$ and $d$.
    \end{Corollary}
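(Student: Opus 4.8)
The plan is to read \eqref{CarEst1} off from the pointwise estimate \eqref{CarEst} of Theorem \ref{thmCarpointwise}, using that once $\beta$ is frozen and $\x_0$ lies at a positive distance from $\overline\Omega$, every power of $r=|\x-\x_0|$ occurring in \eqref{CarEst} is bounded above and below by positive constants on $\overline\Omega$. Set $r_0=\min_{\x\in\overline\Omega}|\x-\x_0|$, so that $0<r_0\le r(\x)\le R$ on $\overline\Omega$, and take $\lambda_0=2R^\beta$, the very threshold already demanded by Theorem \ref{thmCarpointwise}; no additional largeness of $\lambda$ is required.

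First I would bound the left-hand side of \eqref{CarEst} from above: since $e^{2\lambda r^{-\beta}}|\Div(A\nabla u)|^2\ge 0$ and $r^{\beta+2}\le R^{\beta+2}$, \eqref{CarEst} gives $R^{\beta+2}e^{2\lambda r^{-\beta}}|\Div(A\nabla u)|^2\ge C\big[\Div(U)+\lambda^3\beta^4 e^{2\lambda r^{-\beta}}r^{-2\beta-2}|u|^2+\lambda\beta e^{2\lambda r^{-\beta}}|\nabla u|^2\big]$. Dividing by the positive constant $R^{\beta+2}$ and then using $r^{-2\beta-2}\ge R^{-2\beta-2}$ on the $|u|^2$-term and $\beta\ge\beta_0$ on the $|\nabla u|^2$-term, I obtain $e^{2\lambda r^{-\beta}}|\Div(A\nabla u)|^2\ge \tfrac{C}{R^{\beta+2}}\Div(U)+c_1\lambda^3 e^{2\lambda r^{-\beta}}|u|^2+c_2\lambda e^{2\lambda r^{-\beta}}|\nabla u|^2$, where $c_1=C\beta^4 R^{-3\beta-4}>0$ and $c_2=C\beta_0 R^{-\beta-2}>0$ depend only on the admissible quantities.

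It remains to collapse this to the advertised single-constant form. Put $\widehat C=\min\{c_1,\,c_2,\,CR^{-\beta-2}\}>0$ and $\widehat U=\tfrac{C}{R^{\beta+2}\widehat C}\,U$; the scalar factor is $\ge 1$, so this is harmless, and $\widehat C\,\Div(\widehat U)=\tfrac{C}{R^{\beta+2}}\Div(U)$ exactly, while the two nonnegative terms only decrease when their coefficients $c_1,c_2$ are replaced by $\widehat C$. This yields \eqref{CarEst1} with $C$ and $U$ replaced by $\widehat C$ and $\widehat U$; relabelling gives the statement. Moreover \eqref{divU}, together with $r\ge r_0>0$ and $\beta$ fixed, shows $|\widehat U|\le C e^{2\lambda r^{-\beta}}(\lambda^3|u|^2+\lambda|\nabla u|^2)$, so the new divergence field remains of the controlled type needed in the proof of the convexification theorem.

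I do not expect a genuine obstacle: the corollary is essentially a normalization of Theorem \ref{thmCarpointwise}. The only point requiring attention is the constant bookkeeping — choosing $\widehat C$ as a minimum and rescaling $U$ so that a single constant governs the divergence term and both positive terms simultaneously — and checking that this rescaling does not spoil the a priori bound \eqref{divU} (it does not, because $r$ is comparable to $1$ and $\beta$ is held fixed, so the lost $r$-powers and $\beta$-powers are absorbed into the constant).
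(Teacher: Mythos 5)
Your argument is correct and is exactly the (implicit) route the paper takes: the corollary is stated without proof as an immediate consequence of Theorem \ref{thmCarpointwise}, obtained by bounding $r^{\beta+2}\le R^{\beta+2}$ and $r^{-2\beta-2}\ge R^{-2\beta-2}$ on $\overline\Omega$ and absorbing the fixed powers of $\beta$ and $r$ (and the harmless rescaling of $U$) into the constants. Your careful bookkeeping of $\widehat C$ and $\widehat U$, and the check that the rescaled divergence field still satisfies a bound of the form \eqref{divU}, only makes explicit what the paper leaves tacit.
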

    \begin{Corollary}
    Integrating \eqref{CarEst1} on $\Omega$ and using  \eqref{divU}, we obtain 
    \begin{equation}
        \int_{\Omega} e^{2\lambda r^{-\beta}} |\Div A\nabla u|^2d\x
        \geq 
        C \int_{\Omega} e^{2\lambda r^{-\beta}}\big[
            \lambda^3 |u|^2 + \lambda |\nabla u|^2
        \big]d\x
        - C\int_{\partial \Omega}e^{2\lambda r^{-\beta}} \big[\lambda^3|u|^2 + \lambda |\nabla u|^2\big]d\sigma(\x).
        \label{3.40}
    \end{equation}
    In particular, if $u$ is a function that satisfies $u|_{\partial \Omega} = 0$ and
     $\nabla u|_{\partial \Omega}= 0$. Then, \begin{equation}
        \int_{\Omega} e^{2\lambda r^{-\beta}} |\Div A\nabla u|^2d\x
        \geq 
        C \int_{\Omega} e^{2\lambda r^{-\beta}}\big[
            \lambda^3 |u|^2 + \lambda |\nabla u|^2
        \big]d\x.
        \label{CarlemanExercise}
    \end{equation}
    Here, $C$ is a constant depending only on $\Lambda$, $\|A\|_{C^2(\overline \Omega)}$, $\x_0$, $\Omega$, $R$, $\beta$ and $d$.
    \label{col222}
\end{Corollary}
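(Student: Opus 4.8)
The plan is short, since \eqref{CarlemanExercise} and \eqref{3.40} follow from a direct integration of the pointwise estimate \eqref{CarEst1}. First I would note that, because $u \in C^2(\overline\Omega)$ and the vector field $U$ appearing in \eqref{CarEst1} is assembled from $u$, $\nabla u$ and coefficients of class $C^1$, we have $U \in C^1(\overline\Omega;\R^d)$; since $\Omega$ has smooth boundary, the divergence theorem gives $\int_\Omega \Div(U)\,d\x = \int_{\partial\Omega} U\cdot\nu\,d\sigma(\x)$, where $\nu$ is the outer unit normal. Integrating \eqref{CarEst1} over $\Omega$ and rewriting the divergence term via this identity then yields
\[
\int_\Omega e^{2\lambda r^{-\beta}}|\Div(A\nabla u)|^2\,d\x \geq C\int_\Omega e^{2\lambda r^{-\beta}}\big(\lambda^3 |u|^2 + \lambda |\nabla u|^2\big)\,d\x + C\int_{\partial\Omega} U\cdot\nu\,d\sigma(\x).
\]

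Next I would bound the boundary term from below. Since $\x_0 \in \R^d\setminus\Omega$ — and one may assume without loss of generality that $\x_0 \notin \overline\Omega$ — the function $r = |\x-\x_0|$ satisfies $0 < r_{\min} \leq r \leq R$ on $\overline\Omega$. With $\beta$ now fixed, the factors $r^{-2\beta-2}$, $\beta^3$ and $\beta$ occurring in the bound \eqref{divU} are all controlled by constants of the admissible type, so \eqref{divU} reduces to $|U| \leq C e^{2\lambda r^{-\beta}}\big(\lambda^3 |u|^2 + \lambda |\nabla u|^2\big)$ on $\overline\Omega$ after enlarging $C$. Hence $\int_{\partial\Omega} U\cdot\nu\,d\sigma \geq -\int_{\partial\Omega}|U|\,d\sigma \geq -C\int_{\partial\Omega} e^{2\lambda r^{-\beta}}\big(\lambda^3 |u|^2 + \lambda |\nabla u|^2\big)\,d\sigma(\x)$, which is exactly \eqref{3.40}. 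The special case \eqref{CarlemanExercise} is then immediate: if $u|_{\partial\Omega} = 0$ and $\nabla u|_{\partial\Omega} = 0$, the integrand $e^{2\lambda r^{-\beta}}(\lambda^3|u|^2 + \lambda |\nabla u|^2)$ vanishes identically on $\partial\Omega$, so the boundary term in \eqref{3.40} is zero.

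There is no serious obstacle; the two points that need a moment's attention are (i) keeping $U\cdot\nu$, rather than $|U|$, after applying the divergence theorem, which is precisely why the boundary term in \eqref{3.40} appears with a minus sign once it is estimated below by $-|U|$, and (ii) checking that absorbing the powers of $r$ into the constant $C$ is legitimate — this uses that $r$ is bounded away from $0$ on $\overline\Omega$, i.e. that $\x_0$ lies outside (a neighbourhood of) $\overline\Omega$, the same hypothesis already in force throughout Section \ref{sec_Car}.
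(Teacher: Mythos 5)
Your proof is correct and follows exactly the route the paper intends: integrate the pointwise estimate \eqref{CarEst1}, convert $\int_\Omega \Div(U)\,d\x$ to a boundary integral by the divergence theorem, and bound it below by $-\int_{\partial\Omega}|U|\,d\sigma$ using \eqref{divU} together with the fact that $r$ is bounded away from $0$ on $\overline\Omega$ and $\beta$ is fixed. The paper gives no further detail than this, so your write-up simply makes explicit the same argument.
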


\begin{Remark}
The Carleman estimate in \eqref{CarlemanExercise} is similar to  \cite[Lemma 5]{MinhLoc:tams2015}. The main difference is that the result in  \cite[Lemma 5]{MinhLoc:tams2015} is for annulus domains while estimate \eqref{CarlemanExercise} is applicable for more general domains.
It is interesting mentioning that the Carleman estimate in  \cite[Lemma 5]{MinhLoc:tams2015} for annulus domains was used to prove a cloaking phenomenon, see \cite{MinhLoc:tams2015}.
The reader can find many other versions of Carleman estimates in \cite{BeilinaKlibanovBook, KlibanovLiBook, KlibanovLeNguyenIPI2021, LocNguyen:ip2019, Protter:1960AMS}.
These estimates are used to solve inverse problems; see e.g., \cite{KhoaKlibanovLoc:SIAMImaging2020, LeNguyen:jiip2022, NguyenNguyenTruong:arxiv2022}.
\end{Remark}


\section{The Carleman convexification theorem}\label{sec_convexi}

Let $p > \ceil{d/2} + 2.$ 
We have $H^p(\Omega)$ is continuously embedded into $C^2(\overline \Omega).$
Fix $\beta = \beta_0$. For all $\lambda > \lambda_0$ and for $\eta \in (0, 1),$ define the Carleman weighted mismatch functional $J_{\lambda, \eta}: H^p(\Omega) \to \R$ as follows
\begin{equation}
    J_{\lambda, \eta}(v) = 
    \int_{\Omega} e^{2\lambda r^{-\beta}} |-\epsilon_0 \Delta v + F(\x, v, \nabla v)|^2 d\x
    + \lambda^4\int_{\partial \Omega}  e^{2\lambda r^{-\beta}} (|v|^2 + |\nabla v|^2) d\sigma(\x)
    + \eta \|v\|^2_{H^p(\Omega)}.
    \label{4.1}
\end{equation}

    The Carleman weighted mismatch functional $J_{\lambda, \eta}$ in \eqref{4.1} is different from the ones used in our research group's previous papers \cite{KhoaKlibanovLoc:SIAMImaging2020, KlibanovNguyenTran:JCP2022, LeNguyen:JSC2022}. The main differences is that in \eqref{4.1}, we include the integral on $\partial \Omega$. 
    We add this boundary integral to the mismatch functional because we do not know the exact boundary information of the function $v_{\epsilon_0}$ on $\partial \Omega.$
    The presence of this boundary integral somewhat guarantees that the values of $v^{\epsilon_0}_{\rm min}|_{\partial \Omega}$ and $\nabla v^{\epsilon_0}_{\rm min}|_{\partial \Omega}$ are small where $v^{\epsilon_0}_{\rm min}$ is the minimizer of $J_{\lambda, \eta}$.
    Also, since we will
     minimize $J_{\lambda, \eta}$  without  boundary constraints,
     the earlier versions of the Carleman convexification method \cite{KlibanovNik:ra2017, KhoaKlibanovLoc:SIAMImaging2020, KlibanovNguyenTran:JCP2022, LeNguyen:JSC2022}, which require some boundary conditions on the minimizer,  are not applicable. 
     We modify the use of the Carleman estimate in those theorem to obtain the convexification theorem below.

\begin{Theorem}[The convexification theorem] Assume that the function $F$ is of class $C^2(\R^d \times \R \times \R^d).$ We have:

\begin{enumerate}
    \item 
 For all $\lambda > 1$ and $\eta > 0$, 
the functional $J_{\lambda, \nu}$ is Fr\'etchet differentiable. The derivative of $J_{\lambda, \eta}$ is given by
\begin{multline}
    DJ_{\lambda, \eta}(v)h =
    2\int_{\Omega}e^{2\lambda r^{-\beta}}[-\epsilon_0 \Delta v + F(\x, v, \nabla v)][-\epsilon_0 \Delta h + \partial_{s} F(\x, v, \nabla v)h
    + 
    \nabla_{\p}F(\x, v, \nabla v)\cdot \nabla h] d\x
    \\
    +
    2\lambda^4 \int_{\partial \Omega} e^{2\lambda r^{-\beta}} [v h +  
     \nabla v \cdot \nabla h] d\sigma(\x) 
    +2\eta \langle v, h\rangle_{H^p(\Omega)}
\end{multline}
for all $v, h \in H^p(\Omega).$
Here, $\partial_s F$ is the partial differential derivative of the function $F(\x, s, \p)$, $(\x, s, \p) \in \Omega \times \R \times \R^{d}$, with respect to the second variable and $\nabla_{\p} F$ is the gradient vector of $F$ with respect to the third variable $\p.$

\item	Let $M$ be an arbitrarily large number. 
For each $\beta> 1$,
  $\lambda >\lambda_0 = \lambda_0(\epsilon_0, M, b, d, r, F, \beta) > 1$, $\eta > 0$, $u, v \in   \overline{B(M)}$, we have
  \begin{multline}
        J_{\lambda, \eta}(u) - J_{\lambda, \eta}(v) - D J_{\lambda, \eta}(v)(u - v)
        \geq C\int_{\Omega}e^{2\lambda r^{-\beta}} \big[
             |u - v|^2 +  |\nabla (u - v)|^2
        \big]d\x
        \\
        + C\int_{\partial \Omega} \lambda^4 e^{2\lambda r^{-\beta}} (|u - v|^2 + |\nabla (u - v)|^2)d\sigma(\x)
      + \eta \|u - v\|_{H^p(\Omega)}^2
      \label{convexity}
    \end{multline}
Here, the constant $C$ depends only on $\lambda,$ $\beta$, $R,$ $r,$ $d$, $M$, $F$ and $\epsilon_0$.

\item The functional $J_{\lambda, \beta, \eta}$ has a unique minimizer in $\overline{B(M)}$.
\end{enumerate}
\label{thm convex}
\end{Theorem}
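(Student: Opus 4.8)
The plan is to prove the three items in order. For \textbf{item~1} (Fréchet differentiability and the formula for $DJ_{\lambda,\eta}$), I would compute $J_{\lambda,\eta}(v+h)-J_{\lambda,\eta}(v)$ and collect the part linear in $h$. The regularization term $\eta\|v\|_{H^p(\Omega)}^2$ and the boundary term $\lambda^4\int_{\partial\Omega}e^{2\lambda r^{-\beta}}(|v|^2+|\nabla v|^2)\,d\sigma$ are quadratic forms, so their derivatives are immediate and their quadratic increments are $O(\|h\|_{H^p}^2)$ by the trace theorem. For the interior term, write $G(v):=-\epsilon_0\Delta v+F(\x,v,\nabla v)$; since $p>\ceil{d/2}+2$, the embedding $H^p(\Omega)\hookrightarrow C^2(\overline\Omega)$ keeps $v,\nabla v,\Delta v$ bounded and continuous, and the $C^2$ regularity of $F$ yields $F(\x,v+h,\nabla v+\nabla h)=F(\x,v,\nabla v)+\partial_sF(\x,v,\nabla v)h+\nabla_\p F(\x,v,\nabla v)\cdot\nabla h+R_1$ with $|R_1|\le C(|h|+|\nabla h|)^2$ pointwise, $C$ depending on $\|v\|_{C^2}$ and $F$. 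Expanding $|G(v+h)|^2$ and keeping the linear terms produces the stated formula, while the remainder is $O(\|h\|_{C^2}^2)=O(\|h\|_{H^p}^2)$ in $L^2(\Omega)$, which gives Fréchet (not just Gâteaux) differentiability.

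\textbf{Item~2} is the heart of the proof and is where the Carleman estimate enters. Put $w:=u-v$. Since $|z|^2+|\nabla z|^2$ and $\|z\|_{H^p}^2$ are quadratic forms, the boundary term and the regularization term of $J_{\lambda,\eta}(u)-J_{\lambda,\eta}(v)-DJ_{\lambda,\eta}(v)w$ reduce \emph{exactly} to $\lambda^4\int_{\partial\Omega}e^{2\lambda r^{-\beta}}(|w|^2+|\nabla w|^2)\,d\sigma$ and $\eta\|w\|_{H^p}^2$. For the interior term, use $G(u)=G(v)+DG(v)w+R$ with $DG(v)w=-\epsilon_0\Delta w+\partial_sF(\x,v,\nabla v)w+\nabla_\p F(\x,v,\nabla v)\cdot\nabla w$ and $R$ the second-order Taylor remainder of $F$ (so $|R|\le C_M(|w|+|\nabla w|)^2$ pointwise, valid because $u,v\in\overline{B(M)}$ are uniformly bounded in $C^2$). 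Then one has the algebraic identity
\[
|G(u)|^2-|G(v)|^2-2\,G(v)\cdot DG(v)w=|G(u)-G(v)|^2+2\,G(v)\cdot R .
\]
Since $G(u)-G(v)=-\epsilon_0\Delta w+[F(\x,u,\nabla u)-F(\x,v,\nabla v)]$, the inequality $|X+Y|^2\ge\tfrac12|X|^2-|Y|^2$ together with the Lipschitz bound $|F(\x,u,\nabla u)-F(\x,v,\nabla v)|\le C_M(|w|+|\nabla w|)$ gives $|G(u)-G(v)|^2\ge\tfrac{\epsilon_0^2}{2}|\Delta w|^2-C_M(|w|^2+|\nabla w|^2)$, and $|2\,G(v)\cdot R|\le C_M(|w|^2+|\nabla w|^2)$ because $\sup_{v\in\overline{B(M)}}\|G(v)\|_{L^\infty(\Omega)}<\infty$. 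Applying the Carleman estimate \eqref{3.40} of Corollary~\ref{col222} with $A=I$ to $\Delta w$ bounds $\int_\Omega e^{2\lambda r^{-\beta}}|\Delta w|^2$ from below by $C\int_\Omega e^{2\lambda r^{-\beta}}(\lambda^3|w|^2+\lambda|\nabla w|^2)$ minus a boundary term of the same shape. Choosing $\lambda$ large enough that $C\epsilon_0^2\lambda/2$ exceeds the $O(1)$ constants $C_M$, the interior contribution dominates $\int_\Omega e^{2\lambda r^{-\beta}}(|w|^2+|\nabla w|^2)$ up to a boundary remainder of order $\lambda^3$, which is in turn absorbed by the $\lambda^4$ boundary penalty already present in \eqref{4.1}; collecting the three pieces yields \eqref{convexity}.

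\textbf{Item~3} then follows from convex analysis. Item~2 shows that $J_{\lambda,\eta}$ is strictly convex on the convex set $\overline{B(M)}$: for $u\ne v$ the right-hand side of \eqref{convexity} is at least $\eta\|u-v\|_{H^p}^2>0$, so $J_{\lambda,\eta}(u)>J_{\lambda,\eta}(v)+DJ_{\lambda,\eta}(v)(u-v)$. Being Fréchet differentiable, $J_{\lambda,\eta}$ is strongly continuous; a convex strongly continuous functional is weakly lower semicontinuous, and $\overline{B(M)}$, being bounded, closed and convex in the Hilbert space $H^p(\Omega)$, is weakly sequentially compact. Hence $J_{\lambda,\eta}$ attains its infimum over $\overline{B(M)}$, and strict convexity forces the minimizer to be unique.

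The only genuinely delicate point is item~2: one must verify that the powers $\lambda^3$ and $\lambda$ gained from the Carleman estimate really beat, uniformly over $\overline{B(M)}$, the $O(1)$ constants produced by the nonlinear term $F$ — through its Lipschitz bound and through the cross term $2\,G(v)\cdot R$ — which is exactly what dictates the size of $\lambda_0$, and that the boundary terms generated by the Carleman estimate are of strictly lower order in $\lambda$ than the $\lambda^4$ boundary penalty in \eqref{4.1}, so that they can be absorbed. Everything else is routine bookkeeping with the embedding $H^p(\Omega)\hookrightarrow C^2(\overline\Omega)$ and Taylor's theorem.
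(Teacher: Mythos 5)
Your proposal is correct and follows essentially the same route as the paper: the identity $|G(u)|^2-|G(v)|^2-2G(v)\cdot DG(v)w=|G(u)-G(v)|^2+2G(v)\cdot R$ is exactly the paper's decomposition into $I_1+I_2+I_3$, and the subsequent steps (the inequality $|X+Y|^2\ge\tfrac12|X|^2-|Y|^2$, the Lipschitz/Taylor bounds on $\overline{B(M)}$ via the embedding $H^p\hookrightarrow C^2$, the Carleman estimate \eqref{3.40} with $A=\Id$, and absorption of its $\lambda^3$ boundary term by the $\lambda^4$ penalty) coincide with the paper's proof of part 2. The only divergence is in part 3, where you additionally establish existence of the minimizer by weak compactness and weak lower semicontinuity, whereas the paper only argues uniqueness via the variational inequality $DJ_{\lambda,\eta}(v_2)(v_1-v_2)\ge 0$; your addition is a correct and welcome completion.
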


\begin{Remark}
    An intuition for the convexity of $J_{\lambda, \eta}$ is that one can apply the convexification theorem in \cite{KlibanovNguyenTran:JCP2022} to obtain the convexity of the functional
    \[
        v \mapsto  
    I_{\lambda, \eta}(v) = \int_{\Omega} e^{2\lambda r^{-\beta}} |-\epsilon_0 \Delta v + F(\x, v, \nabla v)|^2 d\x
    + \eta \|v\|^2_{H^p(\Omega)}.
    \]
    By adding the convex term  $ \ds\lambda^4\int_{\partial \Omega} e^{2\lambda r^{-\beta}} [|v|^2d\sigma(\x)
    +   |\nabla v|^2]d\sigma(\x)$ to this functional, we obtain the desired convexity of $J_{\lambda, \eta}$.
    However, the convexity of $I_{\lambda, \eta}$ is valid only on a set of functions that satisfy some Cauchy boundary data. 
    Hence, the informal argument above is not rigorous.
  We present the proof of Theorem \ref{thm convex} here.
\end{Remark}

\begin{proof}[Proof of Theorem \ref{thm convex}]
The first part of Theorem \ref{thm convex} can be proved by a straight forward computation similarly in the first part of \cite[Theorem 4.1]{KlibanovNguyenTran:JCP2022}. We now  discuss part 2 of Theorem \ref{thm convex}.
Let $u$ and $v$ be two functions in $H^p(\Omega).$  Let $h = u - v$.
We have
\begin{multline}
    J_{\lambda, \eta}(u) - J_{\lambda, \eta}(v) - D J_{\lambda, \eta}(v)(u - v) 
    \\
    = 
    \int_{\Omega} e^{2\lambda r^{-\beta}}\big[ |-\epsilon_0 \Delta u + F(\x, u, \nabla u)|^2 -  |-\epsilon_0 \Delta v + F(\x, v, \nabla v)|^2 \big]d\x
    \\
    + \lambda^4\int_{\partial \Omega} e^{2\lambda r^{-\beta}} \big[u^2 - v^2 + |\nabla u|^2 - |\nabla v|^2\big]d\sigma(\x)
    + \eta \big[\|u\|^2_{H^p(\Omega)} - \|v\|^2_{H^p(\Omega)}\big]
    \\
    -2\int_{\Omega}e^{2\lambda r^{-\beta}}[-\epsilon_0 \Delta v + F(\x, v, \nabla v)][-\epsilon_0 \Delta h + \partial_{s} F(\x, v, \nabla v)h
    + 
    \nabla_{\p}F(\x, v, \nabla v)\cdot \nabla h] d\x
    \\
    -2\lambda^4 \int_{\partial \Omega} e^{2\lambda r^{-\beta}} [v h +  
     \nabla v \cdot \nabla h] d\sigma(\x) 
    -2\eta \langle v, h\rangle_{H^p(\Omega)}.
    \label{4.5}
\end{multline}
Using the identity $a^2 - b^2 = (a - b)(a +b)$, we deduce from \eqref{4.5} that
\begin{multline}
    J_{\lambda, \eta}(u) - J_{\lambda, \eta}(v) - D J_{\lambda, \eta}(u - v) 
    = 
    \int_{\Omega} e^{2\lambda r^{-\beta}}\big[ -\epsilon_0 \Delta h +F(\x, u, \nabla u) -    F(\x, v, \nabla v)
    \\-2\epsilon_0 \Delta v + 2F(\x, v, \nabla v)
     \big]
    \big[ -\epsilon_0 \Delta h + F(\x, u, \nabla u) -    F(\x, v, \nabla v) \big]
    d\x
    \\
    + \lambda^4\int_{\partial \Omega} e^{2\lambda r^{-\beta}} [(u + v)h + 
    \nabla (u +  v) \cdot \nabla h] d\sigma(\x)
    + \eta \langle u + v, h\rangle_{H^p(\Omega)}
    \\
    -2\int_{\Omega}e^{2\lambda r^{-\beta}}[-\epsilon_0 \Delta v + F(\x, v, \nabla v)][-\epsilon_0 \Delta h + \partial_{s} F(\x, v, \nabla v)h
    + 
    \nabla_{\p}F(\x, v, \nabla v)\cdot \nabla h] d\x
    \\
    -2\lambda^4 \int_{\partial \Omega} e^{2\lambda r^{-\beta}} [v h +  
     \nabla v \cdot \nabla h] d\sigma(\x) 
    -2\eta \langle v, h\rangle_{H^p(\Omega)}.
    \label{4.6}
\end{multline}
Expending the right hand side of \eqref{4.6}, we have
 \begin{equation}
      J_{\lambda, \eta}(u) - J_{\lambda, \eta}(v) - D J_{\lambda, \eta}(u - v) = I_1 + I_2 + I_3 + \lambda^4 \int_{\partial \Omega} e^{2\lambda r^{-\beta}} (|h|^2 + |\nabla h|^2)d\sigma(\x)
      + \eta \|h\|_{H^p(\Omega)}^2
      \label{4.7}
 \end{equation}
 where 
 \begin{align*}
     I_1 &= \int_{\Omega} e^{2\lambda r^{-\beta}}\big| -\epsilon_0 \Delta h +F(\x, u, \nabla u) -    F(\x, v, \nabla v)\big|^2d\x,
     \\
     I_2 &= 2\int_{\Omega} e^{2\lambda r^{-\beta}}
     \big[
     -\epsilon_0 \Delta v + F(\x, v, \nabla v)
     \big]
     \big[ -\epsilon_0 \Delta h +F(\x, u, \nabla u) -    F(\x, v, \nabla v)\big]d\x,
     \\
     I_3 &= -2\int_{\Omega}e^{2\lambda r^{-\beta}}[-\epsilon_0 \Delta v + F(\x, v, \nabla v)][-\epsilon_0 \Delta h + \partial_{s} F(\x, v, \nabla v)h
    + 
    \nabla_{\p}F(\x, v, \nabla v)\cdot \nabla h] d\x.
 \end{align*}
 Using the inequality $(a - b)^2 \geq \frac{1}{2}a^2 - b^2$ and recalling that $u$ and $v$ are in the bounded set $\overline{B(M)}$, we can find a constant $C$ such that
 \begin{align}
     I_1 &\geq \frac{\epsilon_0^2}{2}\int_{\Omega} e^{2\lambda r^{-\beta}} |\Delta h|^2d\x 
     - 
     \int_{\Omega} e^{2\lambda r^{-\beta}} |F(\x, u, \nabla u) - F(\x, v, \nabla v)|^2d\x
     \notag\\
     &\geq 
     \frac{\epsilon_0^2}{2}\int_{\Omega} e^{2\lambda r^{-\beta}} |\Delta h|^2d\x 
     - 
     C\int_{\Omega} e^{2\lambda r^{-\beta}} [|h|^2 + |\nabla h|^2]d\x.
     \label{4.8}
 \end{align}
 On the other hand,
 \begin{multline}
     I_2 + I_3 = - 2\int_{\Omega} e^{2\lambda r^{-\beta}}\big[
     -\epsilon_0 \Delta v + F(\x, v, \nabla v)
     \big]
     \\
     \times
    \big[F(\x, u, \nabla u) - F(\x, v, \nabla v) + \partial_{s} F(\x, v, \nabla v)h
    + 
    \nabla_{\p}F(\x, v, \nabla v)\cdot \nabla h\big]d\x
    \label{4.9}
 \end{multline}
     Since both $u$ and $v$ are in the set $\overline{B(M)}$, we have
     \[
    \big| F(\x, u, \nabla u) - F(\x, v, \nabla v) + \partial_{s} F(\x, v, \nabla v)h
    + 
    \nabla_{\p}F(\x, v, \nabla v)\cdot \nabla h\big| \leq C[|h|^2 + |\nabla h|^2].
     \]
     Thus,
     \begin{equation}
         I_2 + I_3 \geq -C\int_{\Omega} e^{2\lambda r^{-\beta}} [|h|^2 + |\nabla h|^2]d\x.
    \label{4.10}
     \end{equation}
 Combining \eqref{4.7}, \eqref{4.8}, \eqref{4.9} and \eqref{4.10}, we have
 \begin{multline}
     J_{\lambda, \eta}(u) - J_{\lambda, \eta}(v) - D J_{\lambda, \eta}(u - v) 
     \geq 
     \frac{\epsilon_0^2}{2}\int_{\Omega} e^{2\lambda r^{-\beta}} |\Delta h|^2d\x
     - C\int_{\Omega} e^{2\lambda r^{-\beta}} [|h|^2 + |\nabla h|^2]d\x
     \\
     + \lambda^4 \int_{\partial \Omega} e^{2\lambda r^{-\beta}} (|h|^2 + |\nabla h|^2)d\sigma(\x)
      + \eta \|h\|_{H^p(\Omega)}^2.
      \label{4.11}
 \end{multline}
 
 In order to prove the convexity of $J_{\lambda, \eta}$, we need to show that the right hand side \eqref{4.11} is nonnegative.
 This is the main reason why the Carleman estimate in \eqref{3.40} plays the key role in this proof.
 Applying \eqref{3.40} for the function $h$ with $A = \Id$, we have
 \begin{equation}
        \frac{\epsilon_0^2}{2}\int_{\Omega} e^{2\lambda r^{-\beta}} |\Delta h|^2d\x
        \geq 
        \frac{C\epsilon_0^2}{2} \int_{\Omega} e^{2\lambda r^{-\beta}}\big[
            \lambda^3 |h|^2 + \lambda |\nabla h|^2
        \big]d\x
        - \frac{C\epsilon_0^2}{2}\int_{\partial \Omega}e^{2\lambda r^{-\beta}} \big[\lambda^3|h|^2 + \lambda |\nabla h|^2\big]d\sigma(\x).
        \label{4.12}
    \end{equation}
     Letting $\lambda$ be sufficiently large, allowing $C$ to depend on $\epsilon_0$ and $\lambda$, combining \eqref{4.11} and \eqref{4.12}, and recalling that $h = u - v$, we get \eqref{convexity}.

     We next show that $J_{\lambda, \eta}$ has a unique minimizer by using the arguments in \cite{KlibanovNik:ra2017}. Assume $J_{\lambda, \eta}$ has two minimizers $v_1$ and $v_2$ in $\overline{B(M)}$.
     Applying \eqref{convexity} for $u = v_1$ and $v = v_2$, we have
     \begin{multline}
        J_{\lambda, \eta}(v_1) - J_{\lambda, \eta}(v_2) - D J_{\lambda, \eta}(v_2)(v_1 - v_2)
        \geq C\int_{\Omega}e^{2\lambda r^{-\beta}} \big[
             |v_1 - v_2|^2 +  |\nabla (v_1 - v_2)|^2
        \big]d\x
        \\
        + C\int_{\partial \Omega} \lambda^4 e^{2\lambda r^{-\beta}} (|v_1 - v_2|^2 + |\nabla (v_1 - v_2)|^2)d\sigma(\x)
      + \eta \|v_1 - v_2\|_{H^p(\Omega)}^2
      \label{4,12}
    \end{multline}
    By \cite[Lemma 2]{KlibanovNik:ra2017}, since $v_2$ is a minimizer of $J_{\lambda, \eta}$ in $\overline{B(M)},$
    \begin{equation}
        DJ_{\lambda, \eta}(v_2)(v_1 - v_2) \geq 0, 
        \quad \mbox{or } \quad
        -DJ_{\lambda, \eta}(v_2)(v_1 - v_2) \leq 0
    \label{4,13}
    \end{equation}
    Combining \eqref{4,12} and \eqref{4,13}, we have
    \begin{multline}
        J_{\lambda, \eta}(v_1) - J_{\lambda, \eta}(v_2)
        \geq 
        C\int_{\Omega}e^{2\lambda r^{-\beta}} \big[
             |v_1 - v_2|^2 +  |\nabla (v_1 - v_2)|^2
        \big]d\x
        \\
        + C\int_{\partial \Omega} \lambda^4 e^{2\lambda r^{-\beta}} (|v_1 - v_2|^2 + |\nabla (v_1 - v_2)|^2)d\sigma(\x)
      + \eta \|v_1 - v_2\|_{H^p(\Omega)}^2.
      \label{4,14}
    \end{multline}
    Similarly, interchanging the roles of $v_1$ and $v_2$, we have
    \begin{multline}
        J_{\lambda, \eta}(v_2) - J_{\lambda, \eta}(v_1)
        \geq 
        C\int_{\Omega}e^{2\lambda r^{-\beta}} \big[
             |v_1 - v_2|^2 +  |\nabla (v_1 - v_2)|^2
        \big]d\x
        \\
        + C\int_{\partial \Omega} \lambda^4 e^{2\lambda r^{-\beta}} (|v_1 - v_2|^2 + |\nabla (v_1 - v_2)|^2)d\sigma(\x)
      + \eta \|v_1 - v_2\|_{H^p(\Omega)}^2.
      \label{4,15}
    \end{multline}
    Adding \eqref{4,14} and \eqref{4,15}, we obtain $v_1 = v_2.$
\end{proof}

The unique minimizer of $J_{\lambda, \eta}$ can be obtained by the the conventional gradient descent method. We refer the reader to \cite[Theorem 2]{LeNguyen:JSC2022} and \cite[Theorem 4.2]{KlibanovNguyenTran:JCP2022} for this fact. Let $v_{\rm min}^{\epsilon_0}$ be the minimizer of $J_{\lambda, \eta}$, one can repeat the proof in \cite{LeNguyen:JSC2022}. 
We next estimate  the distance of minimizer $v_{\rm min}^{\epsilon_0}$ and $v_{\epsilon_0}.$
We have the theorem.
\begin{Theorem}
Let $v_{\epsilon_0}$ be a function satisfying \eqref{2.9} and \eqref{2.12}.
Assume that $v_{\epsilon_0} \in \overline{B(M)}$ for some large number $M$.
Let $\beta, \lambda$ be such that \eqref{convexity} holds true.
Let $v_{\rm min}^{\epsilon_0}$ be the unique minimizer of
$J_{\lambda, \eta}$ in $\overline{B(M)}$.
We have
\begin{multline}
        \int_{\Omega}e^{2\lambda r^{-\beta}} \big[
             |v_{\epsilon_0} - v_{\rm min}^{\epsilon_0}|^2 +  |\nabla (v_{\epsilon_0} - v_{\rm min}^{\epsilon_0})|^2
        \big]d\x
        + \int_{\partial \Omega} \lambda^4 e^{2\lambda r^{-\beta}} (|v_{\epsilon_0} - v_{\rm min}^{\epsilon_0}|^2 + |\nabla (v_{\epsilon_0} - v_{\rm min}^{\epsilon_0})|^2)d\sigma(\x)
        \\
      + \eta \|v_{\epsilon_0} - v_{\rm min}^{\epsilon_0}\|_{H^p(\Omega)}^2
      \leq 
      C\Big[\lambda^4 \delta^2 \int_{\partial \Omega}  e^{2\lambda r^{-\beta}} d\sigma(\x)
    + \eta \|v_{\epsilon_0}\|^2_{H^p(\Omega)}\Big].
      \label{error_est}
    \end{multline}
    \label{thm4.2}
\end{Theorem}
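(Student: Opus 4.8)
The plan is to run the by-now-standard convexification argument: feed the exact solution $v_{\epsilon_0}$ and the minimizer $v_{\rm min}^{\epsilon_0}$ into the strong convexity inequality \eqref{convexity}, then exploit the minimality of $v_{\rm min}^{\epsilon_0}$ together with an upper bound for the residual functional $J_{\lambda,\eta}(v_{\epsilon_0})$.

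First I would apply \eqref{convexity} with $u = v_{\epsilon_0}$ and $v = v_{\rm min}^{\epsilon_0}$; both functions lie in $\overline{B(M)}$ by hypothesis, so the inequality is available and its right-hand side is exactly the left-hand side of \eqref{error_est}. Thus it suffices to bound
\[
J_{\lambda,\eta}(v_{\epsilon_0}) - J_{\lambda,\eta}(v_{\rm min}^{\epsilon_0}) - DJ_{\lambda,\eta}(v_{\rm min}^{\epsilon_0})(v_{\epsilon_0} - v_{\rm min}^{\epsilon_0})
\]
from above. Since $v_{\rm min}^{\epsilon_0}$ is the minimizer of $J_{\lambda,\eta}$ over $\overline{B(M)}$ and $v_{\epsilon_0}\in \overline{B(M)}$, the first-order optimality condition on a convex set --- the same \cite[Lemma 2]{KlibanovNik:ra2017} invoked in the proof of Theorem \ref{thm convex} --- gives $DJ_{\lambda,\eta}(v_{\rm min}^{\epsilon_0})(v_{\epsilon_0} - v_{\rm min}^{\epsilon_0}) \geq 0$, hence $-DJ_{\lambda,\eta}(v_{\rm min}^{\epsilon_0})(v_{\epsilon_0} - v_{\rm min}^{\epsilon_0}) \leq 0$. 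Also $J_{\lambda,\eta}(v_{\rm min}^{\epsilon_0}) \geq 0$ since $J_{\lambda,\eta}$ is a sum of nonnegative terms. Therefore the displayed quantity is $\leq J_{\lambda,\eta}(v_{\epsilon_0})$, and the left-hand side of \eqref{error_est} is $\leq C\,J_{\lambda,\eta}(v_{\epsilon_0})$.

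It then remains to estimate $J_{\lambda,\eta}(v_{\epsilon_0})$ directly from the definition \eqref{4.1}. The volume integral vanishes identically because $v_{\epsilon_0}$ solves \eqref{2.9}, i.e. $-\epsilon_0\Delta v_{\epsilon_0} + F(\x, v_{\epsilon_0},\nabla v_{\epsilon_0}) = 0$ in $\Omega$. For the boundary integral, the bounds $|v_{\epsilon_0}| < C\delta$ and $|\nabla v_{\epsilon_0}| < C\delta$ on $\partial\Omega$ from \eqref{2.12} give
\[
\lambda^4\int_{\partial\Omega} e^{2\lambda r^{-\beta}}\big(|v_{\epsilon_0}|^2 + |\nabla v_{\epsilon_0}|^2\big)\,d\sigma(\x) \leq C\lambda^4\delta^2\int_{\partial\Omega} e^{2\lambda r^{-\beta}}\,d\sigma(\x),
\]
and the regularization term is exactly $\eta\|v_{\epsilon_0}\|_{H^p(\Omega)}^2$. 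Adding the three contributions and relabelling constants yields $J_{\lambda,\eta}(v_{\epsilon_0}) \leq C\big[\lambda^4\delta^2\int_{\partial\Omega} e^{2\lambda r^{-\beta}}\,d\sigma(\x) + \eta\|v_{\epsilon_0}\|_{H^p(\Omega)}^2\big]$, which combined with the previous step is precisely \eqref{error_est}.

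This proof is mostly bookkeeping once \eqref{convexity} is in hand, so I do not expect a serious obstacle. The one point that needs care --- and the reason we only obtain an estimate rather than a clean convergence rate --- is the use of the optimality condition in its inequality form: because $v_{\rm min}^{\epsilon_0}$ is constrained to the closed ball $\overline{B(M)}$, if it lies on the sphere $\|v\|_{H^p(\Omega)} = M$ then $DJ_{\lambda,\eta}(v_{\rm min}^{\epsilon_0})$ need not vanish and only the variational inequality $DJ_{\lambda,\eta}(v_{\rm min}^{\epsilon_0})(v_{\epsilon_0} - v_{\rm min}^{\epsilon_0}) \geq 0$ is available; this is exactly what \cite[Lemma 2]{KlibanovNik:ra2017} supplies and all we use. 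As in the proof of Theorem \ref{thm convex}, the constant $C$ here is allowed to depend on $\lambda$ and $\epsilon_0$, so no statement is made about the behaviour of the weighted $L^2$ distance as $\lambda\to\infty$.
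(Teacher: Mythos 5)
Your proposal is correct and follows essentially the same route as the paper's proof: apply \eqref{convexity} with $u = v_{\epsilon_0}$, $v = v_{\rm min}^{\epsilon_0}$, discard the nonpositive terms $-J_{\lambda,\eta}(v_{\rm min}^{\epsilon_0}) - DJ_{\lambda,\eta}(v_{\rm min}^{\epsilon_0})(v_{\epsilon_0}-v_{\rm min}^{\epsilon_0})$ via \cite[Lemma 2]{KlibanovNik:ra2017} and the nonnegativity of $J_{\lambda,\eta}$, and then bound $J_{\lambda,\eta}(v_{\epsilon_0})$ using \eqref{2.9} and \eqref{2.12} exactly as in the paper's estimate \eqref{4.18}. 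Your closing remark on why only the variational inequality (rather than a vanishing derivative) is available for the constrained minimizer is a correct and welcome clarification of a point the paper leaves implicit.
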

\begin{proof}
    Applying \eqref{convexity} for $u = v_{\epsilon_0}$ and $v = v_{\rm min}^{\epsilon_0}$, we have
    \begin{multline}
        J_{\lambda, \eta}(v_{\epsilon_0}) - J_{\lambda, \eta}(v_{\rm min}^{\epsilon_0}) - D J_{\lambda, \eta}(v_{\rm min}^{\epsilon_0})(v_{\epsilon_0} - v_{\rm min}^{\epsilon_0})
        \geq C\int_{\Omega}e^{2\lambda r^{-\beta}} \big[
             |v_{\epsilon_0} - v_{\rm min}^{\epsilon_0}|^2 +  |\nabla (v_{\epsilon_0} - v_{\rm min}^{\epsilon_0})|^2
        \big]d\x
        \\
        + C\int_{\partial \Omega} \lambda^4 e^{2\lambda r^{-\beta}} (|v_{\epsilon_0} - v_{\rm min}^{\epsilon_0}|^2 + |\nabla (v_{\epsilon_0} - v_{\rm min}^{\epsilon_0})|^2)d\sigma(\x)
      + \eta \|v_{\epsilon_0} - v_{\rm min}^{\epsilon_0}\|_{H^p(\Omega)}^2
      \label{4.16}
    \end{multline}
     Since $v_{\rm min}^{\epsilon_0}$ is the minimizer of $J_{\lambda, \eta}$ in $\overline{B(M)}$, by   \cite[Lemma 2]{KlibanovNik:ra2017},
    \begin{equation*}
        DJ_{\lambda, \eta}(v_{\rm min}^{\epsilon_0})(v_{\epsilon_0} - v_{\rm min}^{\epsilon_0}) \geq 0, 
        \quad  
        \mbox{or }
        \quad
        -DJ_{\lambda, \eta}(v_{\rm min}^{\epsilon_0})(v_{\epsilon_0} - v_{\rm min}^{\epsilon_0}) \leq 0,
    \end{equation*}
    we have
    \begin{equation}
        - J_{\lambda, \eta}(v_{\rm min}^{\epsilon_0}) - D J_{\lambda, \eta}(v_{\rm min}^{\epsilon_0})(v_{\epsilon_0} - v_{\rm min}^{\epsilon_0}) \leq 0.
        \label{4.17}
    \end{equation}
    On the other hand, recalling that $v_{\epsilon_0}$ satisfies \eqref{2.9} and \eqref{2.12}, we have
    \begin{equation}
    J_{\lambda, \eta}(v_{\epsilon_0}) \leq 
    \lambda^4 C\delta^2 \int_{\partial \Omega}  e^{2\lambda r^{-\beta}} d\sigma(\x)
    + \eta \|v_{\epsilon_0}\|^2_{H^p(\Omega)}.
    \label{4.18}
\end{equation}
    Combining \eqref{4.16}, \eqref{4.17}  and \eqref{4.18} yields \eqref{error_est}.
    The proof is complete.
\end{proof}

\begin{Remark}
Fix $\lambda > \lambda_0$. Since the Carleman weight function $e^{2\lambda r^{-\beta}}$ is bounded from below and above by positive constants, it follows from \eqref{error_est} that
\begin{equation}
    \|v_{\epsilon_0} - v_{\rm min}^{\epsilon_0}\|_{H^1(\Omega)}^2 \leq C\big(\delta^2 + \eta \|v_{\epsilon_0}\|_{H^p(\Omega)}^2\big).
    \label{4.20}
\end{equation}
\end{Remark}

\section{Numerical study} \label{sec5}

The analysis in Section \ref{sec_changevariable}, Theorem \ref{thm convex}, Theorem \ref{thm4.2} and estimate \eqref{4.20} suggest  Algorithm \ref{alg} to compute the solution to \eqref{HJ}.
In this section, we present the implementation and some numerical examples.
Note that in Step \ref{s5} of Algorithm \ref{alg}, we have accepted the well-known vanishing viscosity process for Hamilton-Jacobi equations, that guarantees $v_{\epsilon_0}$ approximates the true viscosity solution to \eqref{HJv}.

\begin{algorithm}[ht]
\caption{\label{alg}The procedure to compute the numerical solution to \eqref{HJ} on a domain $G$}
	\begin{algorithmic}[1]
	\STATE \label{s1} Choose $\Omega = (-R, R)^d \Supset G$. Choose a cut-off function $\chi_{\delta}$ as in \eqref{chi} for some $\delta \in (0, 1)$.
	
	\STATE\label{s2} Choose $\x_0 \in \R^d \setminus \Omega$, $\beta > 0$, $\lambda > 0$. Define a Carleman weight function $e^{2\lambda r^{-\beta}}$ where $r(\x) = |\x - \x_0|$ for all $\x \in \Omega.$
	
    \STATE \label{s3} Choose a viscosity parameter $\epsilon_0$ and a regularization parameter $\eta $, both of which are positive and small. Choose $M > 0$ sufficiently large.
    
	\STATE  \label{s4}
	Define and minimize the functional $J_{\lambda, \eta}$ in $\overline{B(M)}$. 
	The minimizer is denoted by $v_{\rm min}^{\epsilon_0}$. 
	\STATE \label{s5} Set the computed solution to \eqref{HJ} in $G$ by the function $u_{\epsilon_0}(	\x) = v_{\rm min}^{\epsilon_0}(\x)/\chi_{\delta}(\x)$ for all $\x \in G.$
\end{algorithmic}
\end{algorithm}

\subsection{Numerical implementation}

We implement Algorithm \ref{alg} to compute the restriction of solution to \eqref{HJ} on $G = (-0.8, 0.8)^d$ using the finite difference method. We set $\Omega = (-R, R)^d$ where $R = 2$. In this section, for simplicity, we consider two cases $d = 1$ and $d = 2$.
We choose the Gaussian-like function $\chi_{\delta}(\x) = e^{-0.5(|\x|^2)}$ for all $\x \in \R^d$. The function $\chi_{\delta}(\x)$ is less than $\delta = 0.1353$ for all $\x \in \R^d \setminus \Omega.$ The number $c$ in \ref{chi} is $0.7362.$ The choices above include details for Step \ref{s1} of Algorithm \ref{alg}.

The Carleman weight function and other parameters in Step \ref{s2} and Step \ref{s3} of Algorithm \ref{alg} are chosen by a trial and error process. 
We manually try many sets of parameters until we obtain an acceptable solution for a reference test (test 1) below. 
We choose $\x_0 = (9, 0)$, $\beta = 20$, $\lambda = 3$, $\epsilon_0 = 10^{-3}$ and $\eta = 10^{-3}.$
These parameters are used for all other tests.

In Step \ref{s4}, we rewrite the function $J_{\lambda, \eta}$ in the finite difference scheme.
Consider the case when $d = 2.$
Let $N$ be a positive integer. Let $h = 2R/(N - 1)$ represent the step size in space.
On $\Omega$, we arrange a set of $N \times N$ uniform grid points $\Omega_h$ as
\[
    \Omega_{h} = \big\{ \x_{ij} = (x_i = -R + (i-1)h, y_j = -R + (j-1)h), 1 \leq i, j \leq N - 1
    \big\}.
\]
In all of numerical examples below, $N = 70.$
In 2D, the functional $J_{\lambda, \eta}$ is approximated in finite difference as
\begin{multline}
    J_{\lambda, \eta}^h(v) =  h^2\sum_{i, j = 2}^{N-1}  e^{2\lambda r^{-\beta}(\x_{ij})}\big|
    -\epsilon_0 \Delta_h v(\x_{ij}) + F(\x_{ij}, v(\x_{ij}), \nabla_h v(\x_{ij})) 
    \big|^2
    \\
     +h\lambda^4 \sum_{i = 1}^N  e^{2\lambda r^{-\beta}(\x_{i1})} (|v(\x_{i1})|^2 + |\nabla_h v(\x_{i1})|^2) 
     +h\lambda^4\sum_{i = 1}^N  e^{2\lambda r^{-\beta}(\x_{iN})} (|v(\x_{iN})|^2 + |\nabla_h v(\x_{iN})|^2) 
    \\ 
    +h\lambda^4 \sum_{j = 1}^N  e^{2\lambda r^{-\beta}(\x_{1j})} (|v(\x_{1j})|^2 + |\nabla_h v(\x_{1j})|^2) 
     +h\lambda^4 \sum_{j = 1}^N  e^{2\lambda r^{-\beta}(\x_{Nj})} (|v(\x_{Nj})|^2 + |\nabla_h v(\x_{Nj})|^2)
     \\
     + \eta h^2 \sum_{i,j = 2}^{N - 1}\big(v^2(\x_{ij}) + |\nabla_h v(\x_{ij}) + |\Delta_h v(\x_{ij})|^2|\big).
     \label{5.1}
\end{multline}
In \eqref{5.1}, we have reduced the norm in the regularization term to $p = 2$ to simplify the implementation and to improve the speed of computation. We do not experience any difficulty with this small change. In our implementation, instead of writing the computational code for the gradient descent method, we use the optimization toolbox of Matlab, in which the gradient descent method is coded. More precisely, we use the command ``fminunc" to minimize the functional $J_{\lambda, \eta}$. The command ``fminunc" requires an initial solution $v_0$. We choose $v_0 \equiv 0$ in all tests.  
Step \ref{s5} of Algorithm \ref{s5} is implemented directly. 

The implementation for the case $d = 1$ is similar. We do not repeat all details here.

\subsection{Numerical examples}

We show two numerical results in 1D and two numerical results in 2D.

\subsubsection{Examples in 1D}

{\bf Test 1.}
We test if the convexification method can be applied to compute a periodic solution to a Hamilton-Jacobi equation.
We compute the solution to 
\begin{equation}
    6u(x) + \sqrt{|u'(x)|^2 + 1} = 
    6 e^{\sin(\pi x)}+\sqrt {
    \pi^2  
\cos^2 (\pi x)    e^{2\sin ( 
\pi x ) })+1}
\quad 
x \in \R.
\label{test1}
\end{equation}
The true solution to \eqref{test1} is the function $u^*(x) = e^{\sin(\pi x)}$, $x \in \R.$ 
The true and computed solution are given in Figure \ref{fig_1da}.

\begin{figure}[ht]
    \centering
    \subfloat[\label{fig_1a} The true and computed solution to \eqref{test1}]{\includegraphics[width = .3\textwidth]{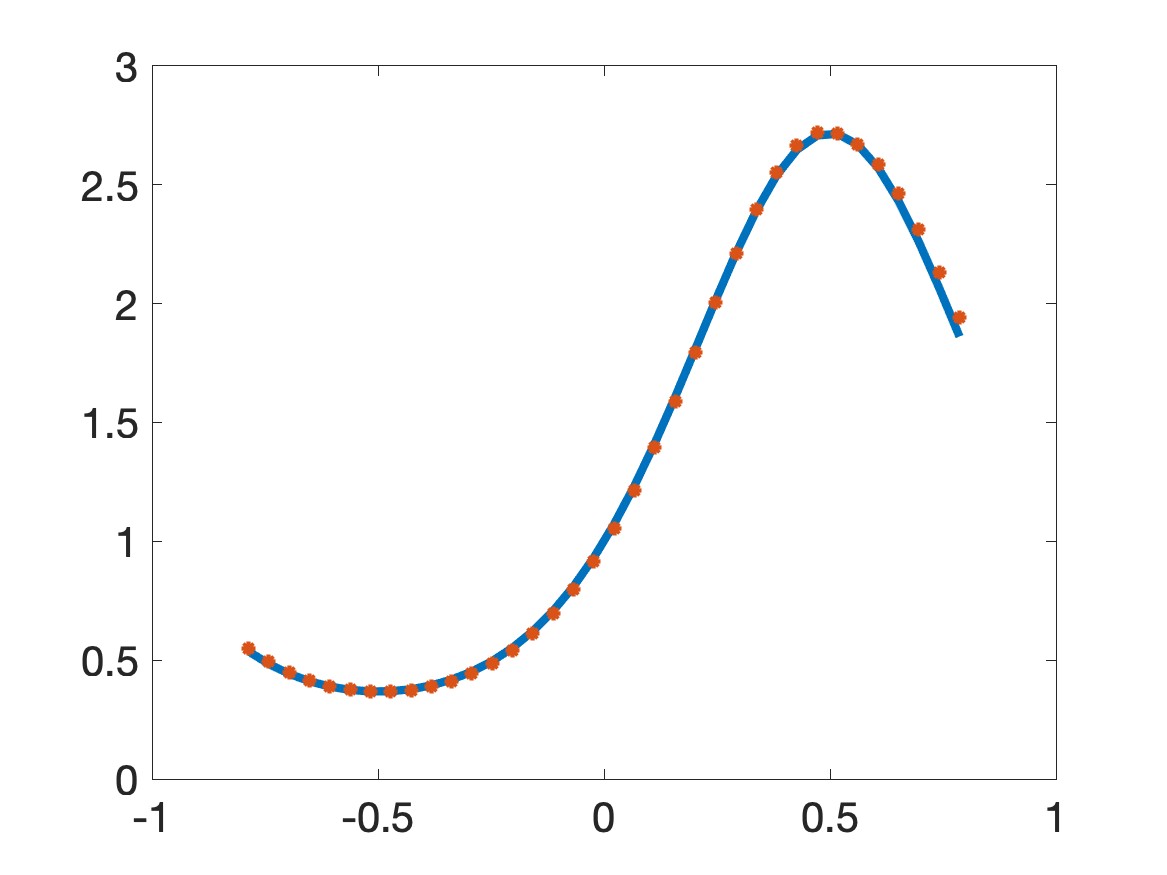}}
    \quad
    \subfloat[\label{fig_1b}  The relative error]{\includegraphics[width = .3\textwidth]{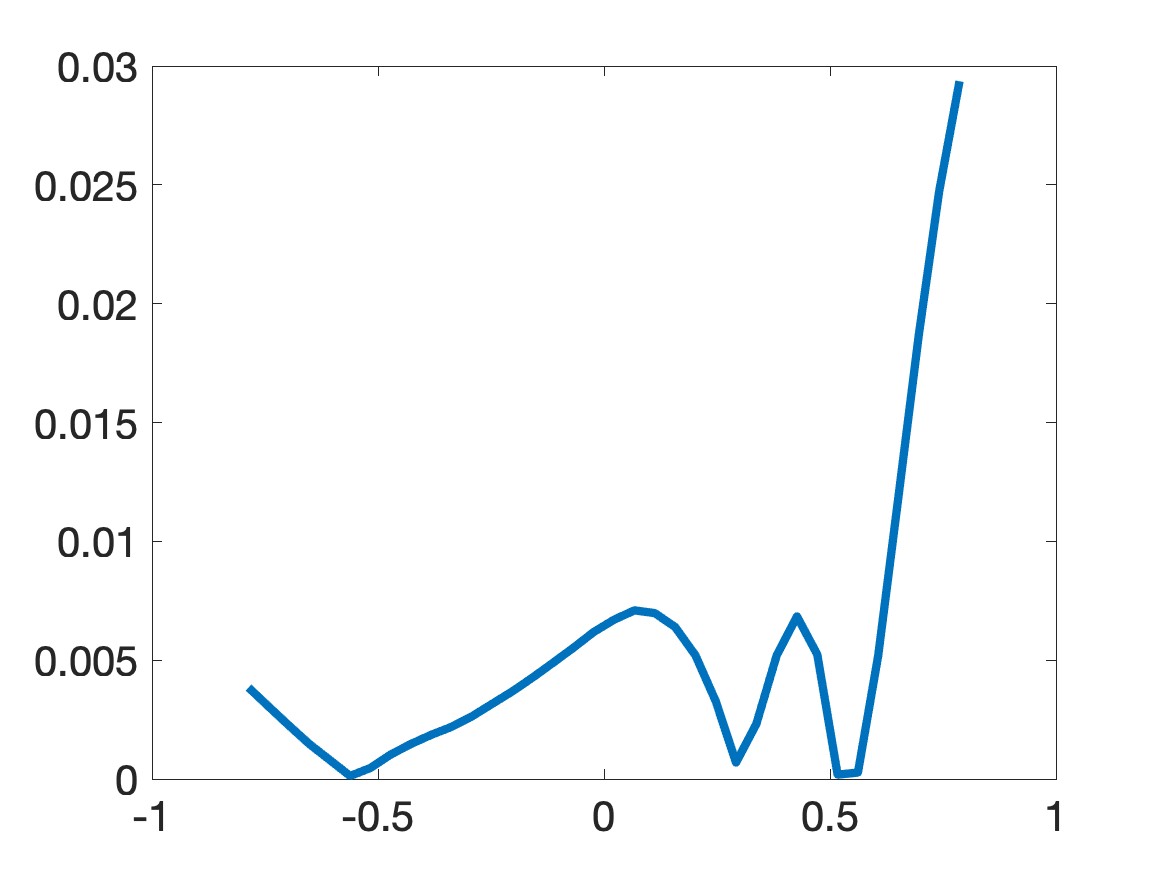}}
    \caption{Test 1. True (solid) and computed  (dot) solutions  to Hamilton-Jacobi equation \eqref{test1} in the interval $(-0.8, 0.8)$ and the relative error $\frac{|u_{\rm comp} - u^*|}{\|u^*\|_{L^{\infty}(G)}}$. The maximal value of this error function is $0.0294$.}
    \label{fig_1da}
\end{figure}

The convexification method provides a good solution to \eqref{test1}.The true solution in this test is periodic. Computing periodic solutions to Hamilton-Jacobi equations is very interesting and is a great concern in the scientific community; especially, in the study of periodic structure.
The numerical result is  satisfactory.
The error in computation is small.

{\bf Test 2} We next test the case when solution to \eqref{HJ} is quasi periodic. 
We solve the equation
\begin{equation}
    5u(x) + \sqrt{|u'(x)|^2 + 1} = 
    5 \sin \left( \frac{\pi x^4}{2} \right) +\sqrt {4\,{\pi}^{2}{x}^{6}
 \left( \cos \left( \frac{\pi x^4}{2} \right)  \right) ^{2}+1}    
\quad 
x \in \R.
\label{test2}
\end{equation}
The true solution to \eqref{test2} is $u^*(x) = \sin \left( \frac{\pi x^4}{2} \right)$ for all $\x \in \R.$
The graphs of the true solution $u^*$ and the computed solution bu using Algorithm \ref{alg} 
are displayed in Figure \ref{fig_2da}.

\begin{figure}[ht]
    \centering
    \subfloat[\label{fig_2a} The true and computed solution to \eqref{test2}]{\includegraphics[width = .3\textwidth]{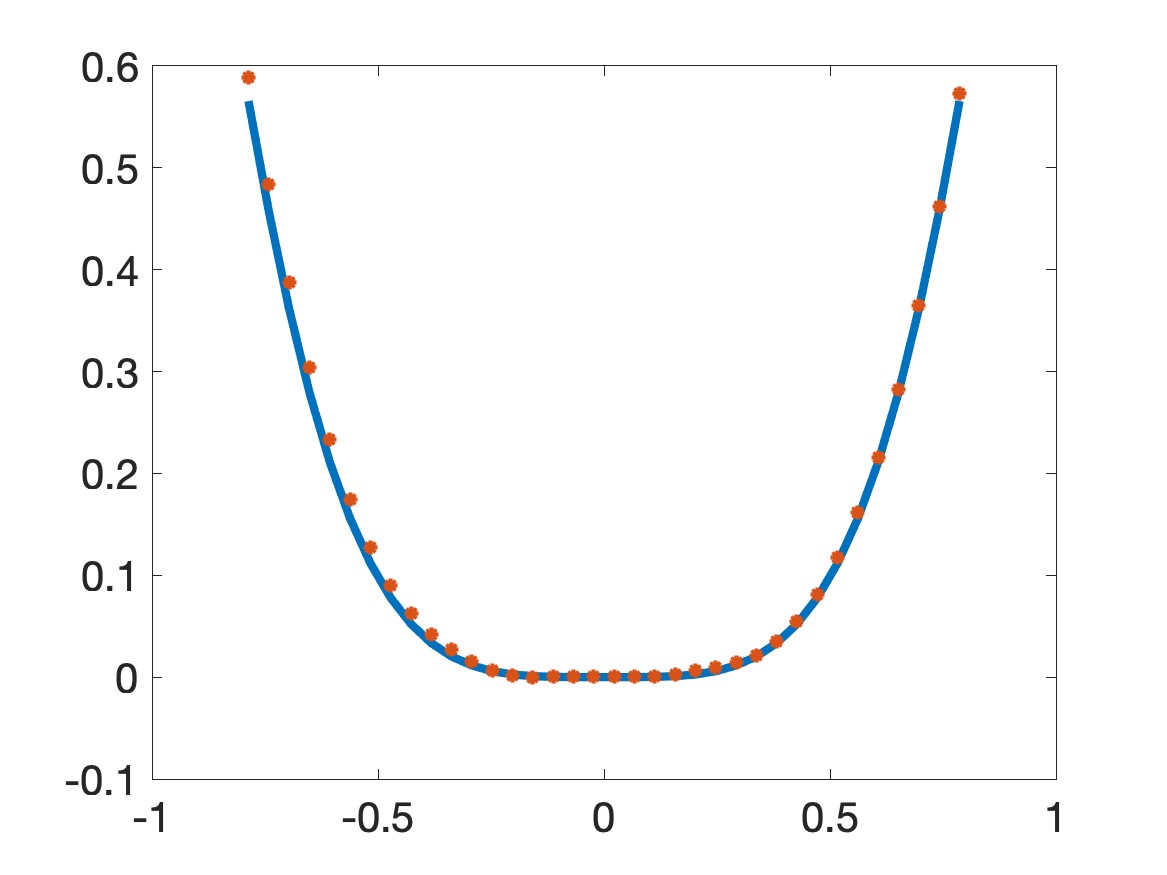}}
    \quad
    \subfloat[\label{fig_2b}  The relative error]{\includegraphics[width = .3\textwidth]{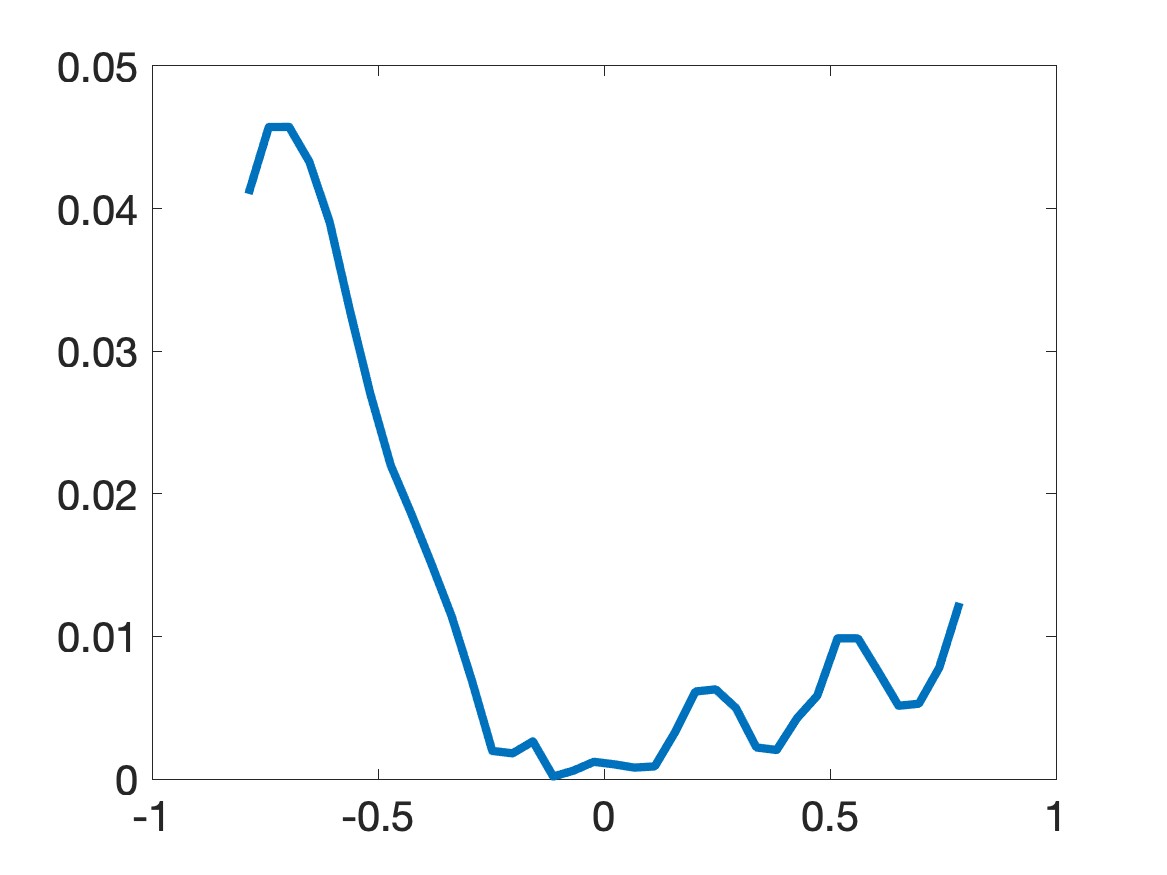}}
    \caption{Test 2. True (solid) and computed (dot) solutions to Hamilton-Jacobi equation \eqref{test2} in the interval $(-0.8, 0.8)$ and the relative error $\frac{|u_{\rm comp} - u^*|}{\|u^*\|_{L^{\infty}(G)}}$. The maximal value of this error function is $0.0457$.}
    \label{fig_2da}
\end{figure}

As in Test 1, it is evident that the convexification method delivers a satisfactory solution to \eqref{test2}. 
This test is interesting because the solution is  quasi-periodic. 
Computing this kind of solution that is not periodic is more interesting than the case of periodic solutions.

{\bf Test 3.} In Test 1 and Test 2, we study the case when the solution and the nonlinearity $H$ are smooth. We now test the nonsmooth case. 
We solve the equation
\begin{equation}
	10 u + \sqrt{|u'|^2 + 1} = g(x)
	\quad x \in \R
	\label{5.3333}
\end{equation}
where 
\[
	g(x) = \left\{
		\begin{array}{ll}
			10\big(-|2x| + \sin(x)\big)
			+\sqrt{(-2 + \sin(x))^2 + 1}
			&x \geq 0,\\
			10\big(-|2x| + \sin(x)\big)
			+\sqrt{(2 + \sin(x))^2 + 1} &x < 0.
		\end{array}
	\right.
\]
The true viscosity solution to \eqref{5.3333} is given by $u^*(x) = -|2x| + \sin(x)$ for all $x \in \R$.
In fact, we only need to verify the conditions in Definition \ref{def_viscos} at the corner of the graph of $u^*$, say at the place where $x_0 = 0$. Let $\varphi$ be a function in the class $C^1(\R)$ with $u^* - \varphi$ having a strict maximum at $x_0 = 0.$ Without lost of the generality, we can consider the case $u(0) = \varphi(0) = 0$. 
It is clear that $\varphi'(0) \in [-2, 2].$
So,
 $10\varphi(0) + \sqrt{|\varphi'(0)|^2 + 1} \leq \sqrt{5} = g(0).$
 Hence, $u^*$ is a subviscosity solution to \eqref{5.3333}. It is also a superviscosity supersolution to \eqref{5.3333} because there is no smooth function $\phi$ touches the function $u^*$ from below at $x_0 = 0.$

\begin{figure}[ht]
    \centering
    \subfloat[\label{fig_3a} The true and computed solution to \eqref{test1}]{\includegraphics[width = .3\textwidth]{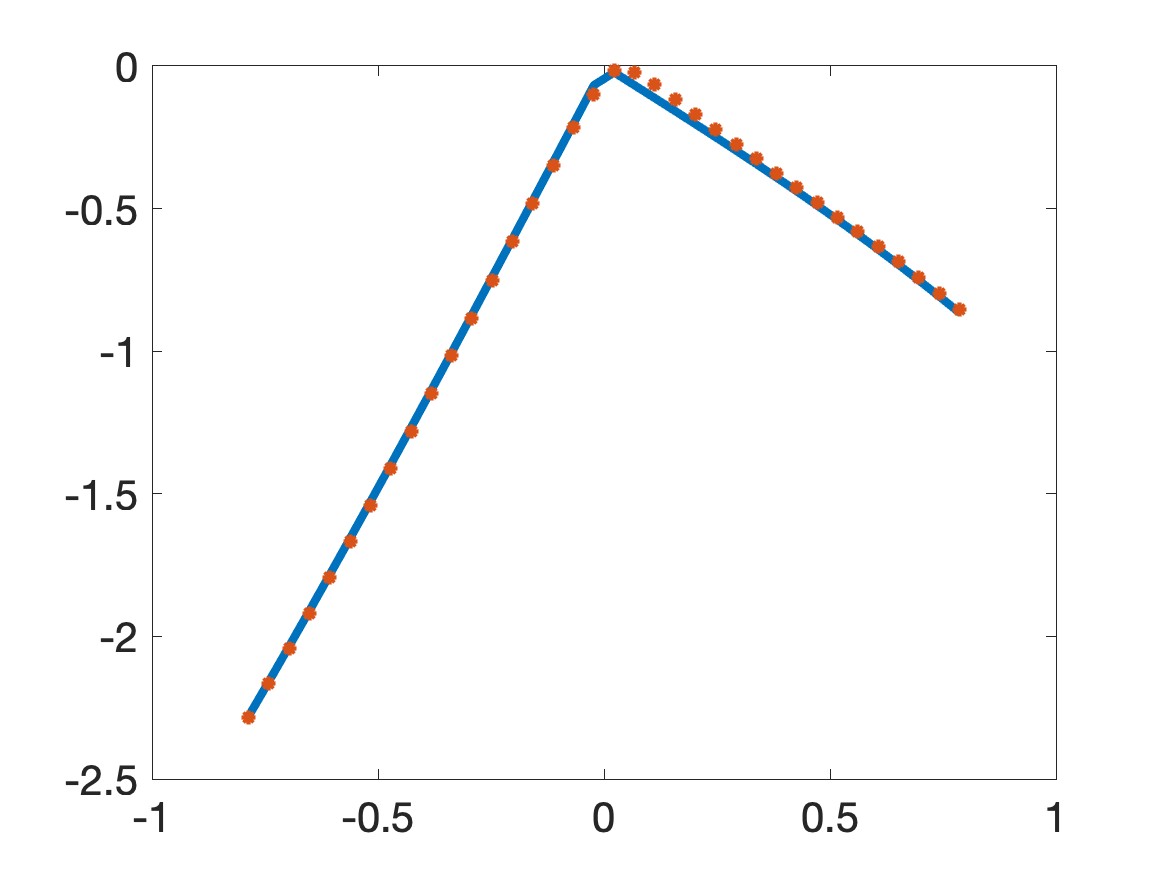}}
    \quad
    \subfloat[\label{fig_3b}  The relative error]{\includegraphics[width = .3\textwidth]{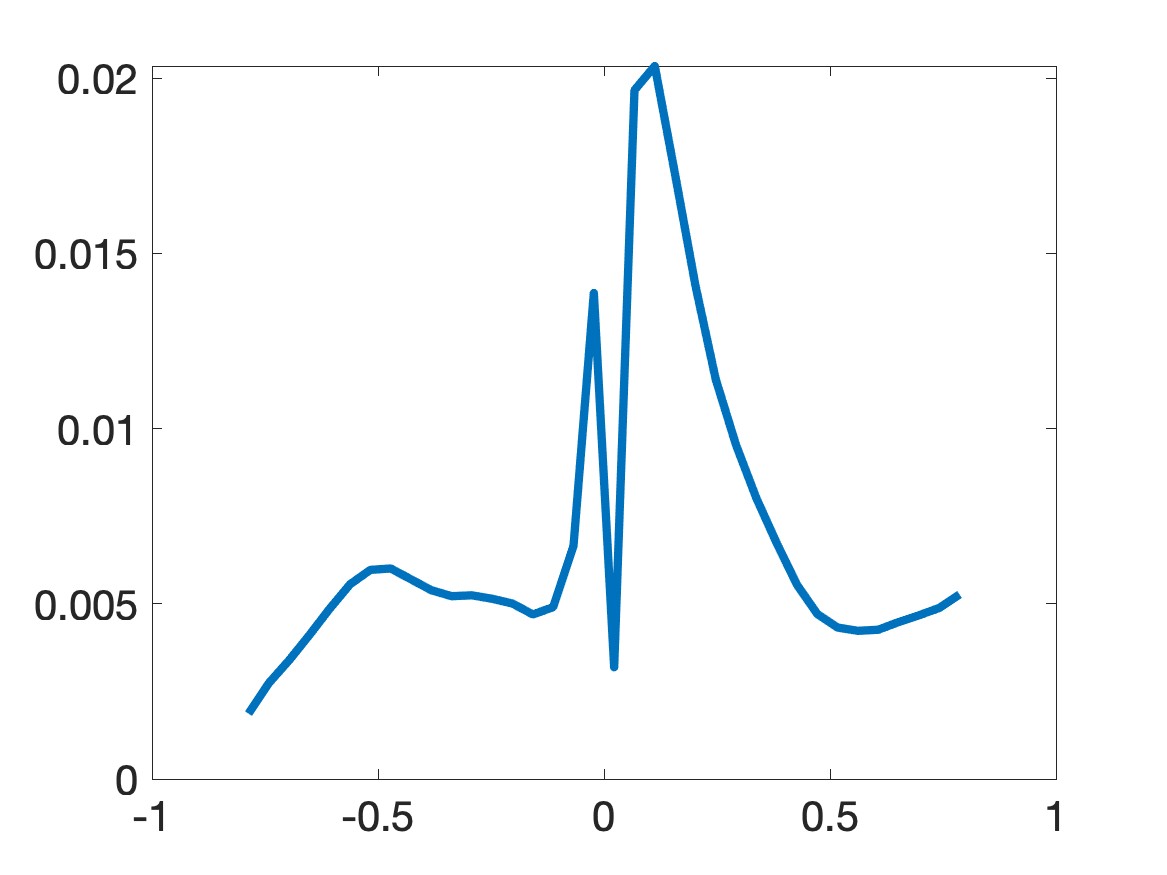}}
    \caption{\label{test3_1D}Test 3. True (solid) and computed  (dot) solutions  to Hamilton-Jacobi equation \eqref{test1} in the interval $(-0.8, 0.8)$ and the relative error $\frac{|u_{\rm comp} - u^*|}{\|u^*\|_{L^{\infty}(G)}}$. The maximal value of this error function is $0.0203$.}
    \label{fig_1da}
\end{figure}

Although this test is challenging, it is evident from Figure \ref{test3_1D} that the convexification method provides acceptable numerical result. The error occurs mostly at the discontinuity of the function $g$ and at the top corner of the graph of the solution.

\subsubsection{Examples in 2D}

{\bf Test 4.} We consider the case $d = 2$. We test the  convexification method by solving the following 2D Hamilton-Jacobi equation
\begin{multline}
    7 u(\x) + \sqrt{|\nabla u|^2 + 1} 
    = 
    7\,\sin \left( \frac{1}{2}\,\pi\, \left( {x}^{2}- \left( y- 0.2 \right) ^{2}
 \right)  \right) 
 +\frac{1}{2}\,\Big(4\,{\pi}^{2}{x}^{2} \left( \cos \left( 
\frac{1}{2}\,\pi\, \left( {x}^{2}- \left( y- 0.2 \right) ^{2} \right) 
 \right)  \right) ^{2}
 \\
 +{\pi}^{2} \left( -2\,y+ 0.4 \right) ^{2}
 \left( \cos \left( \frac{1}{2}\,\pi\, \left( {x}^{2}- \left( y- 0.2 \right) ^
{2} \right)  \right)  \right) ^{2}+4\Big)^{1/2}
 \label{test3}
\end{multline}
 for all $\x = (x, y) \in \R^2.$
The true solution to \eqref{test3} is $u^*(\x) = \sin\big(\frac{\pi}{2}(x^2 + (y - 0.2)^2)\big)$.
The numerical result of this test is displayed in Figure \ref{fig_3da}
\begin{figure}[ht]
    \centering
    \subfloat[ The true  solution to \eqref{test3}]{\includegraphics[width = .3\textwidth]{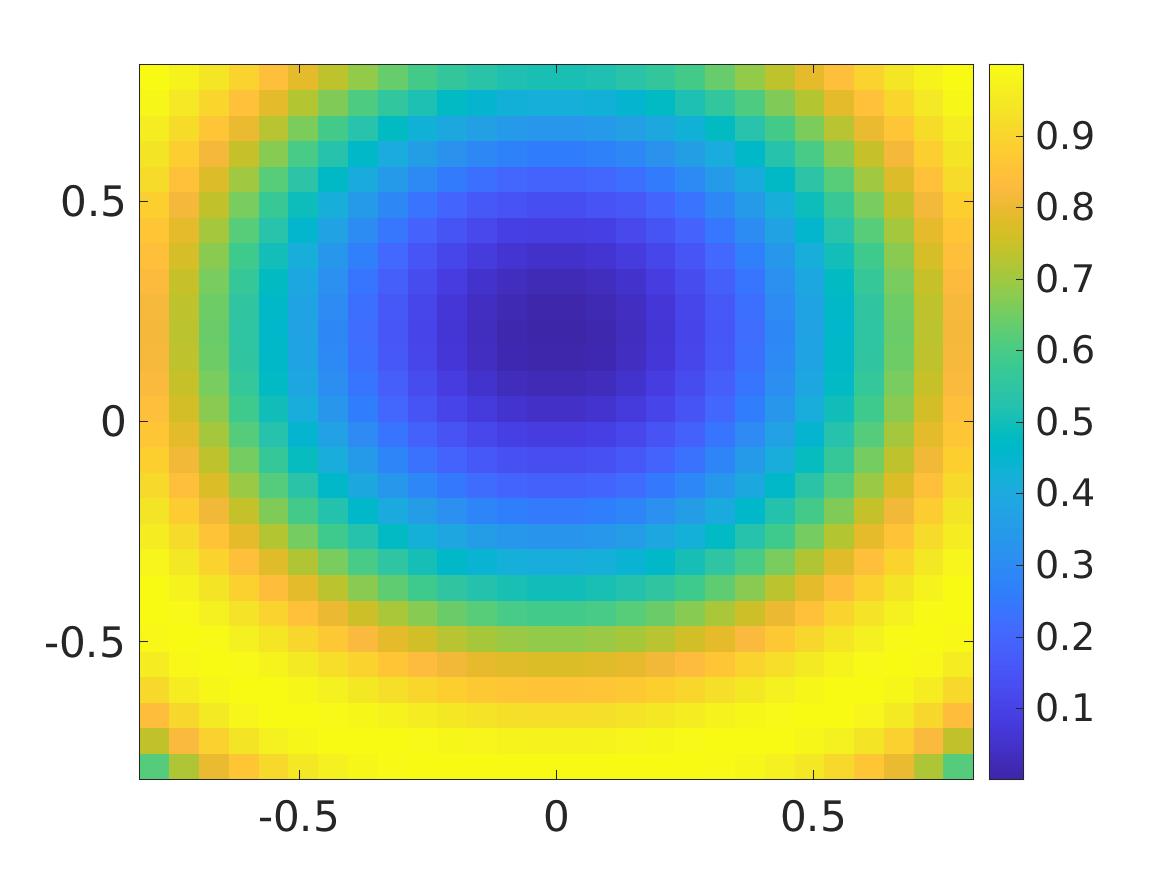}}
    \quad
    \subfloat[ The computed  solution to \eqref{test3}]{\includegraphics[width = .3\textwidth]{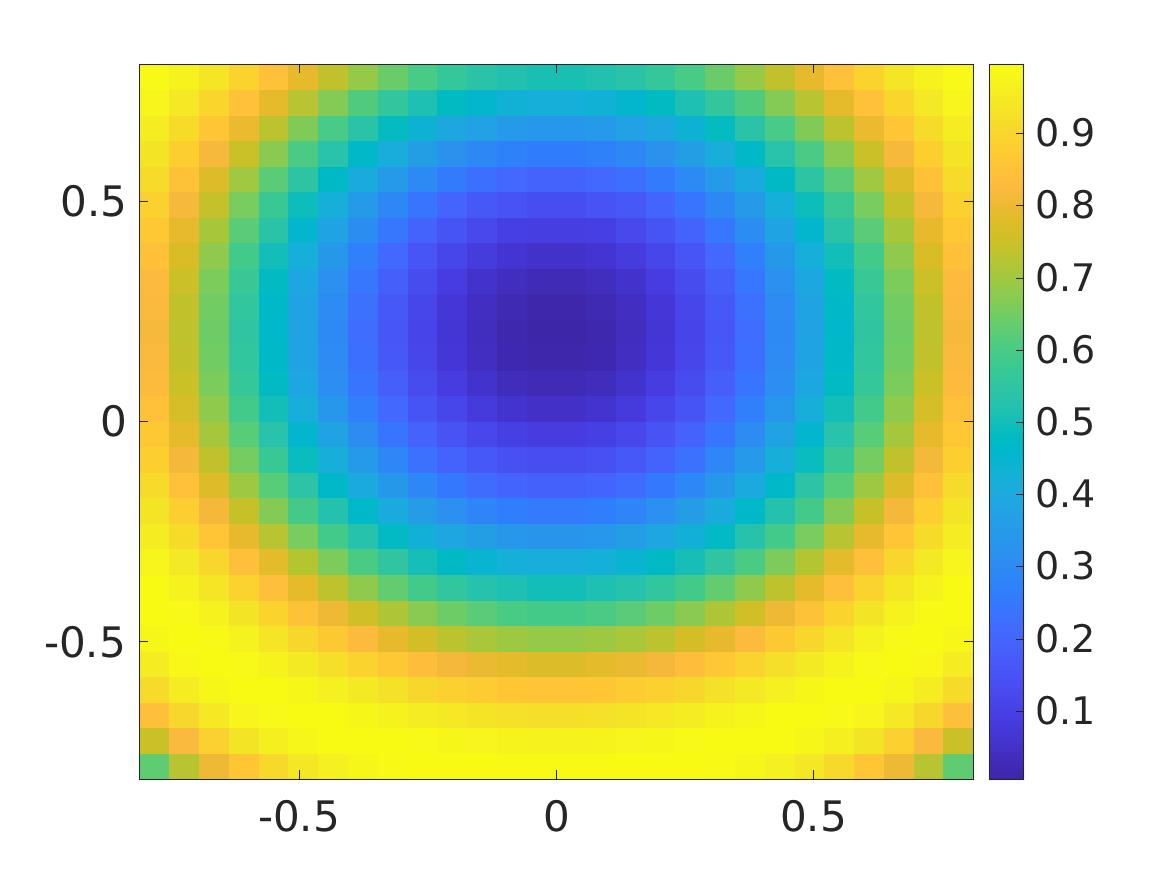}}
    \quad
    \subfloat[  The relative error]{\includegraphics[width = .3\textwidth]{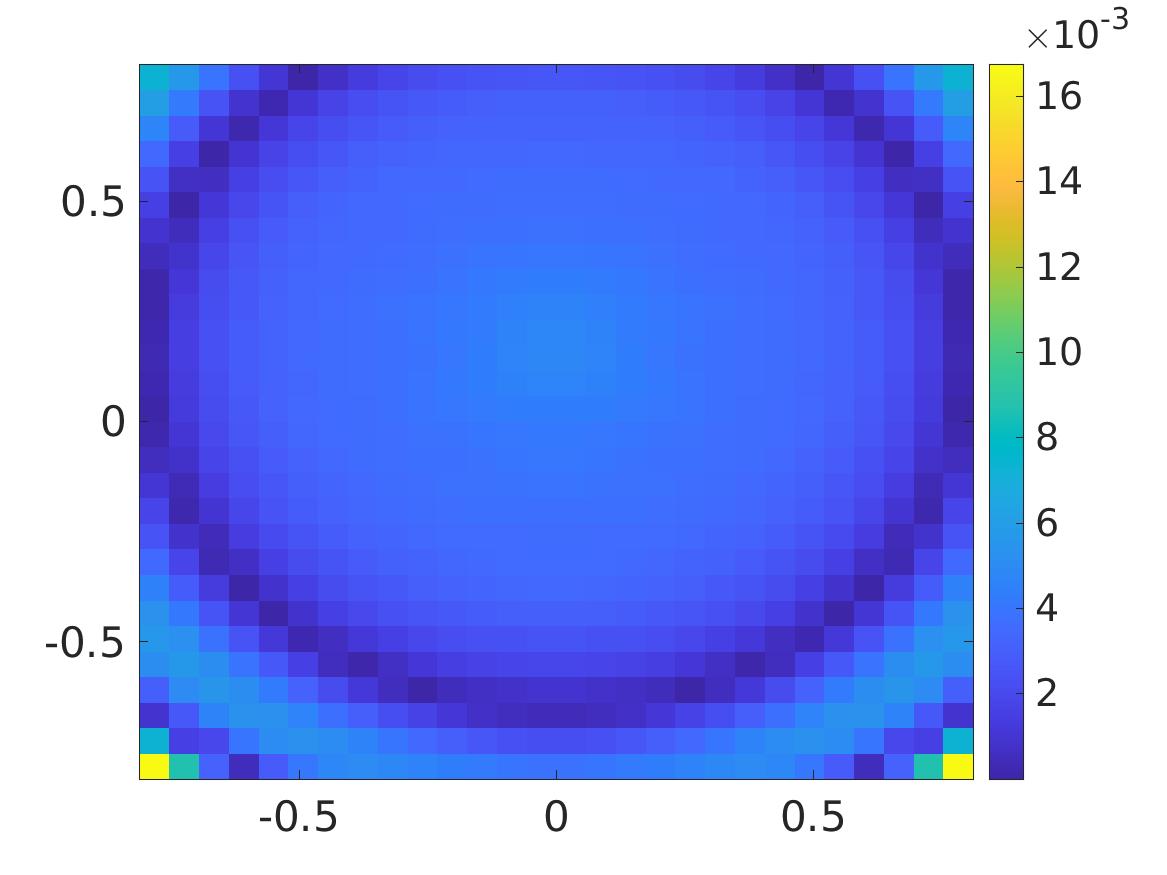}}
    \caption{Test 4. True and computed solutions to Hamilton-Jacobi equation \eqref{test3} in the interval $(-0.8, 0.8)^2$. The relative error in (c) is given by $\frac{|u_{\rm comp} - u^*|}{\|u^*\|_{L^{\infty}(G)}}$. The maximal value of this error function is $0.0168$.}
    \label{fig_3da}
\end{figure}

It is evident that the numerical result of this test is out of expectation. 
It is interesting to mention that the convexification method successfully compute the quasi-periodic solution to Hamilton-Jacobi equations.

{\bf Test 5.}  In this example, we test Algorithm \ref{alg} for unbounded and quasi-periodic solution.
More interestingly, in the test, the Hamiltonian is not convex with respect to $\nabla u.$
We solve the following 2D Hamilton-Jacobi equation
\begin{equation}
    10 u(\x) + |u_x(\x)| - |u_y(\x)| 
    = 
 -10\,x+10\,\cos \left( {x}^{2}+y \right) + \left| 1+2\,x\sin \left( {x
}^{2}+y \right)  \right| - \left| \sin \left( {x}^{2}+y \right) 
 \right|
 \label{test4}
\end{equation}
 for all $\x = (x, y) \in \R^2.$
The true solution to \eqref{test4} is $u^*(\x) = -x + \cos(x^2 + y)$.
The numerical result of this test is displayed in Figure \ref{fig_4da}
\begin{figure}[ht]
    \centering
    \subfloat[ The true  solution to \eqref{test4}]{\includegraphics[width = .3\textwidth]{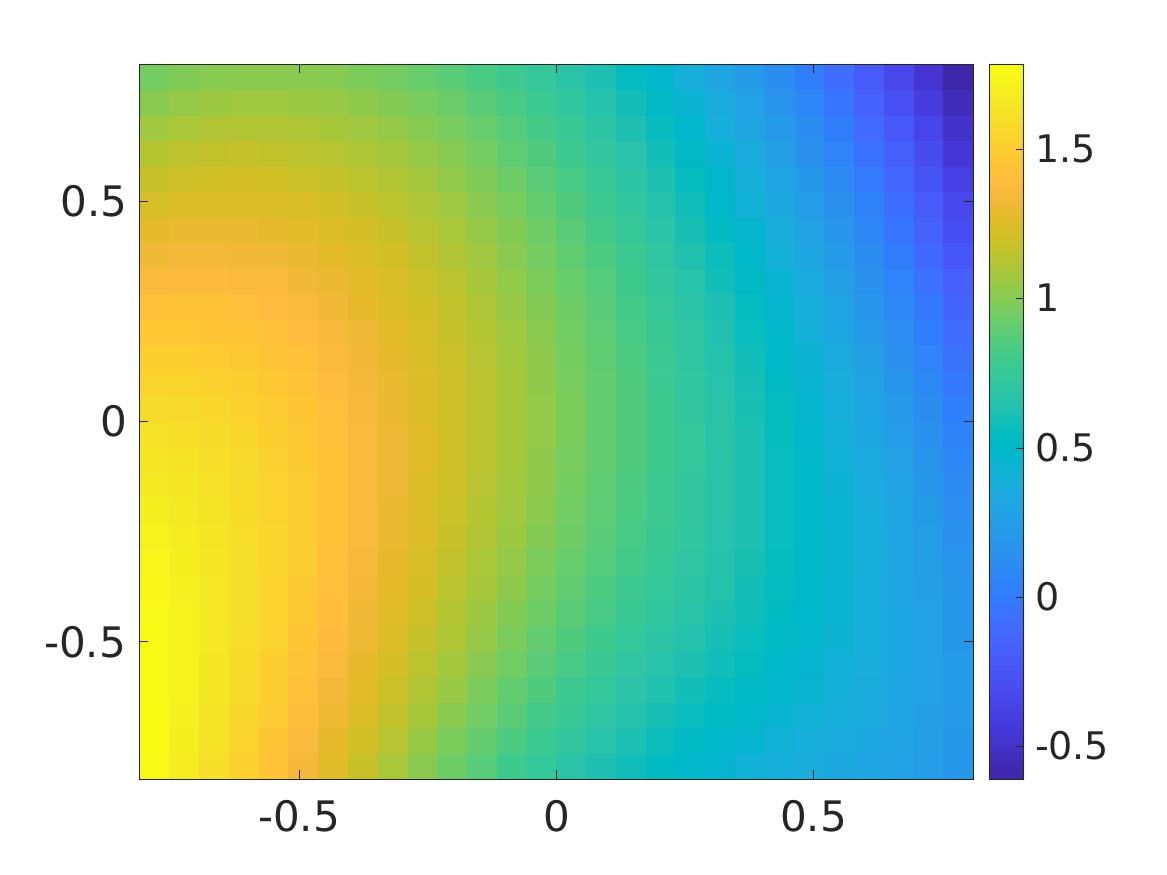}}
    \quad
    \subfloat[ The computed  solution to \eqref{test4}]{\includegraphics[width = .3\textwidth]{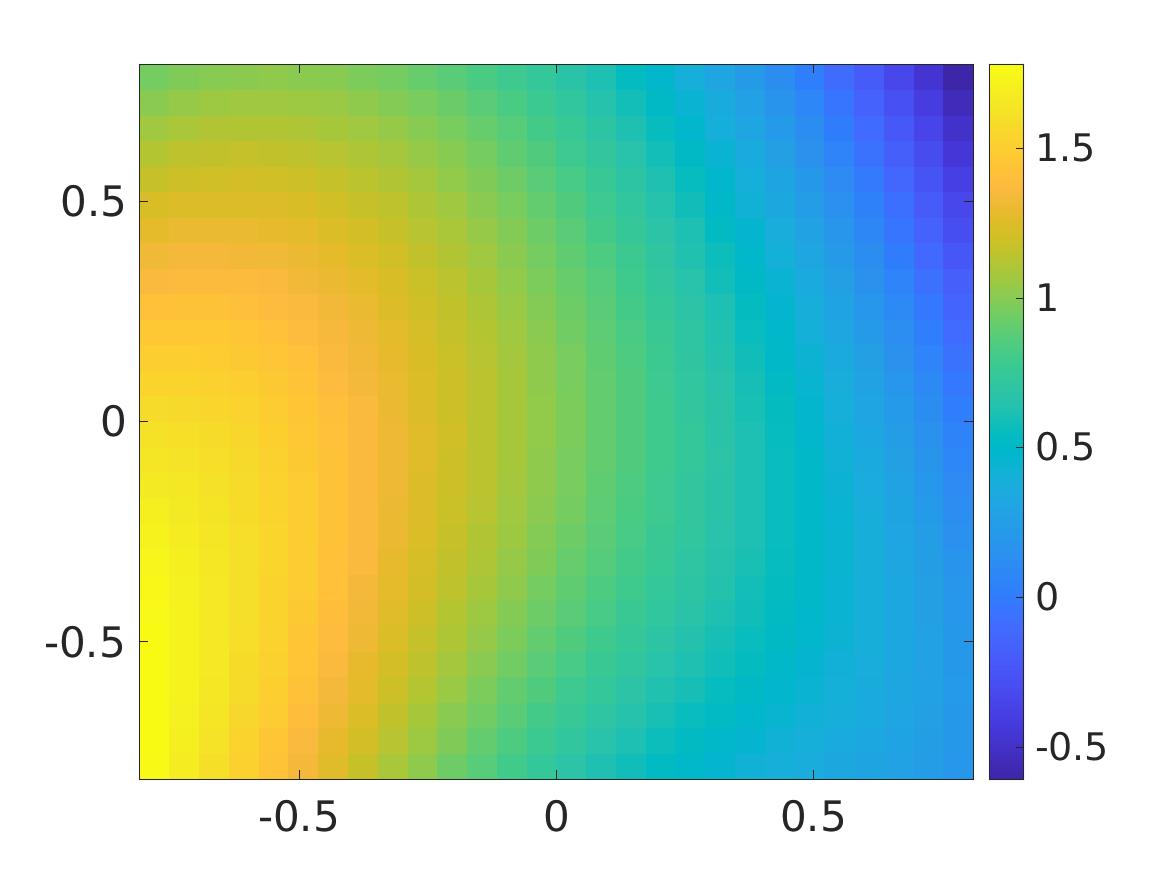}}
    \quad
    \subfloat[  The relative error]{\includegraphics[width = .3\textwidth]{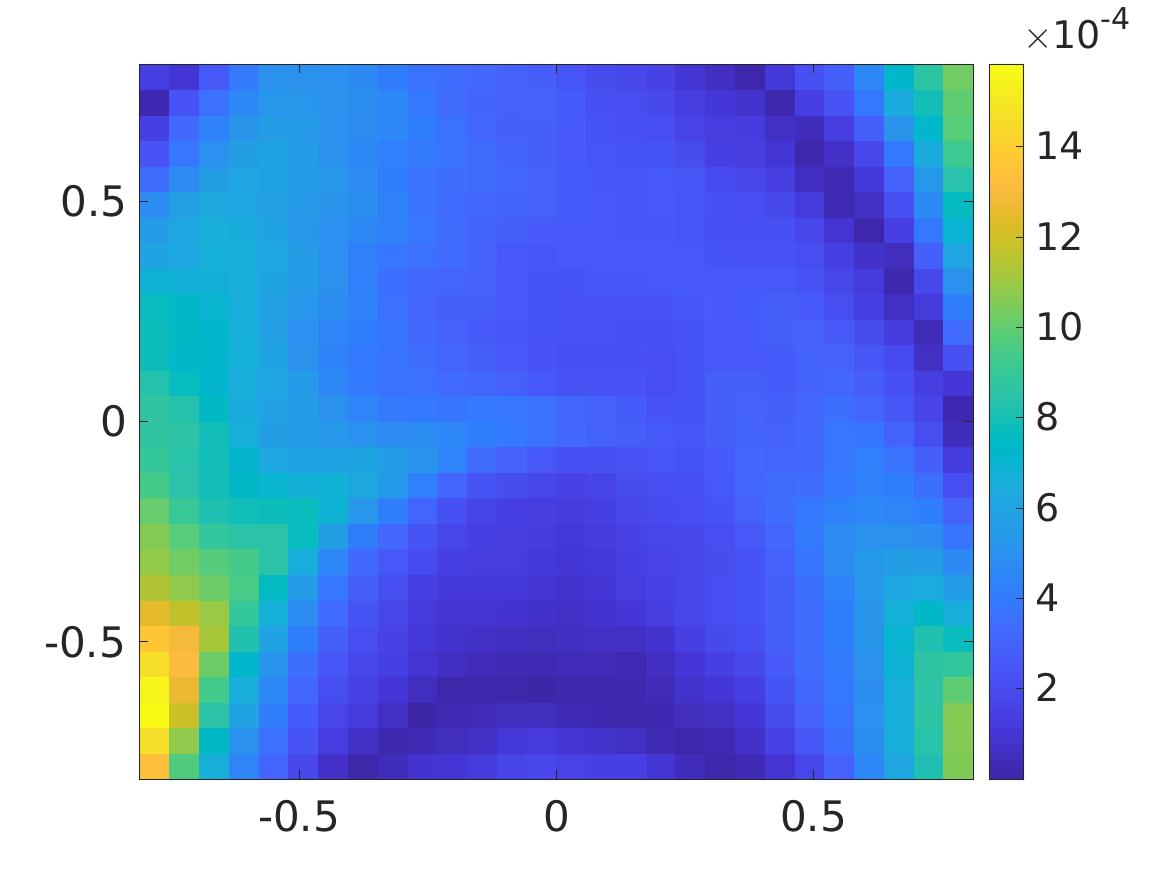}}
    \caption{Test 5. True and computed solutions to Hamilton-Jacobi equation \eqref{test4} in the interval $(-0.8, 0.8)^2$. The relative error in (c) is given by $\frac{|u_{\rm comp} - u^*|}{\|u^*\|_{L^{\infty}(G)}}$. The maximal value of this error function is $0.0016$.}
    \label{fig_4da}
\end{figure}

Although the solution to this test has an unbounded component and a quasi periodic component, we can compute the solution of this test with very small error.

{\bf Test 6.}  Like in Test 3, we consider a special Hamilton-Jacobi equation, in which the Hamiltonian is not convex. The true solution is not in the class $C^1$. 
We solve the equation
\begin{equation}
	10 u + |u_x| - |u_y| = g(\x)
	\label{test5}
\end{equation}
where
\[ 
g(\x) = \left\{
	\begin{array}{ll}
		10\big(-|2x| + \cos(x^2 + \pi y)\big)
		+ 2|  1 + 2x \sin(x^2 + \pi y)| 
		- \pi |\sin(x^2 + \pi y)| &x \geq 0, y \in \R
	\\	
	10\big(-|2x| + \cos(x^2 + \pi y)\big)
		+ 2|  1 - 2x \sin(x^2 + \pi y)| 
		- \pi |\sin(x^2 + \pi y)| &x < 0, y \in \R.
	\end{array}
\right.
\]
The true solution is given by
$u^*(\x) = -|2x| + \cos(x^2 + \pi y)$ 
for all $\x = (x, y) \in \R^2.$
In order to verify that $u^*$ is the viscosity solution to \eqref{test5}, we argue similarly to the argument in Test 3.
The numerical result of this test is displayed in Figure \ref{fig_5da}.
\begin{figure}[ht]
    \centering
    \subfloat[ The true  solution to \eqref{test5}]{\includegraphics[width = .3\textwidth]{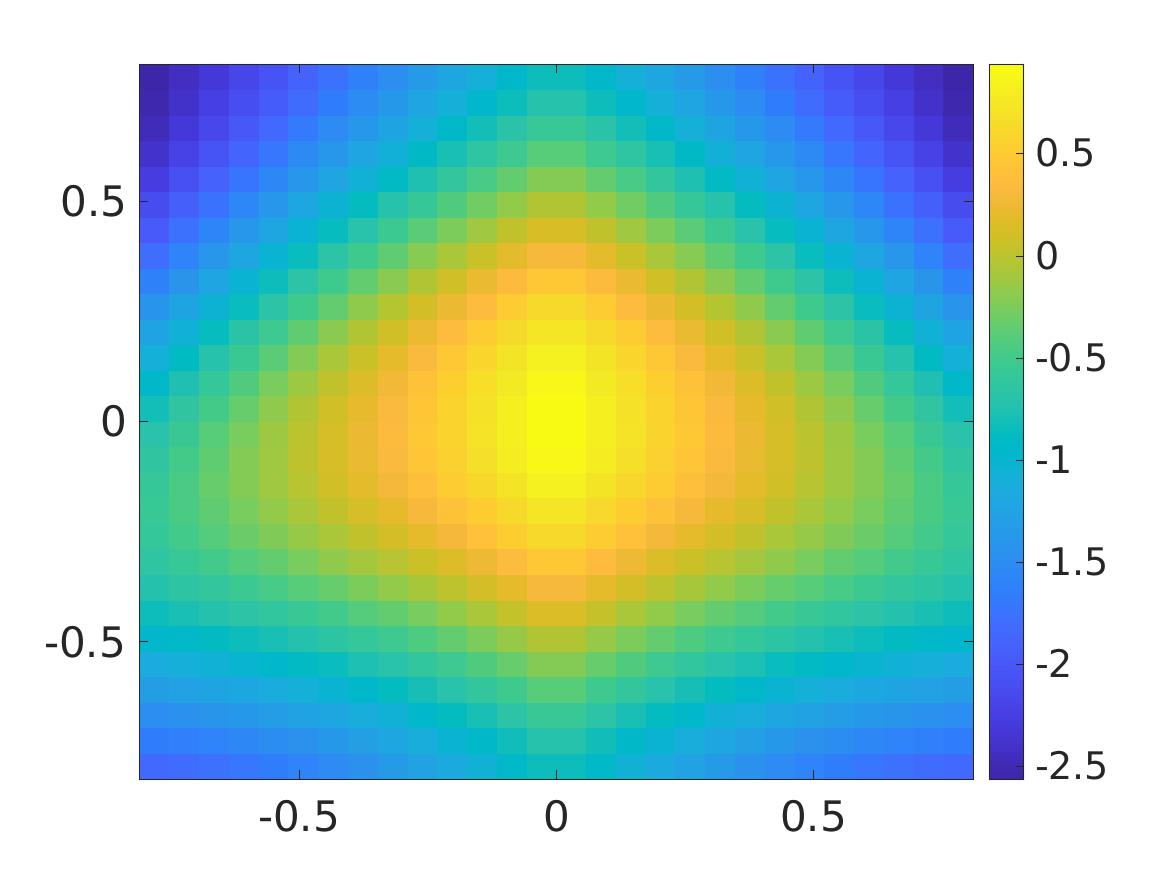}}
    \quad
    \subfloat[ The computed  solution to \eqref{test5}]{\includegraphics[width = .3\textwidth]{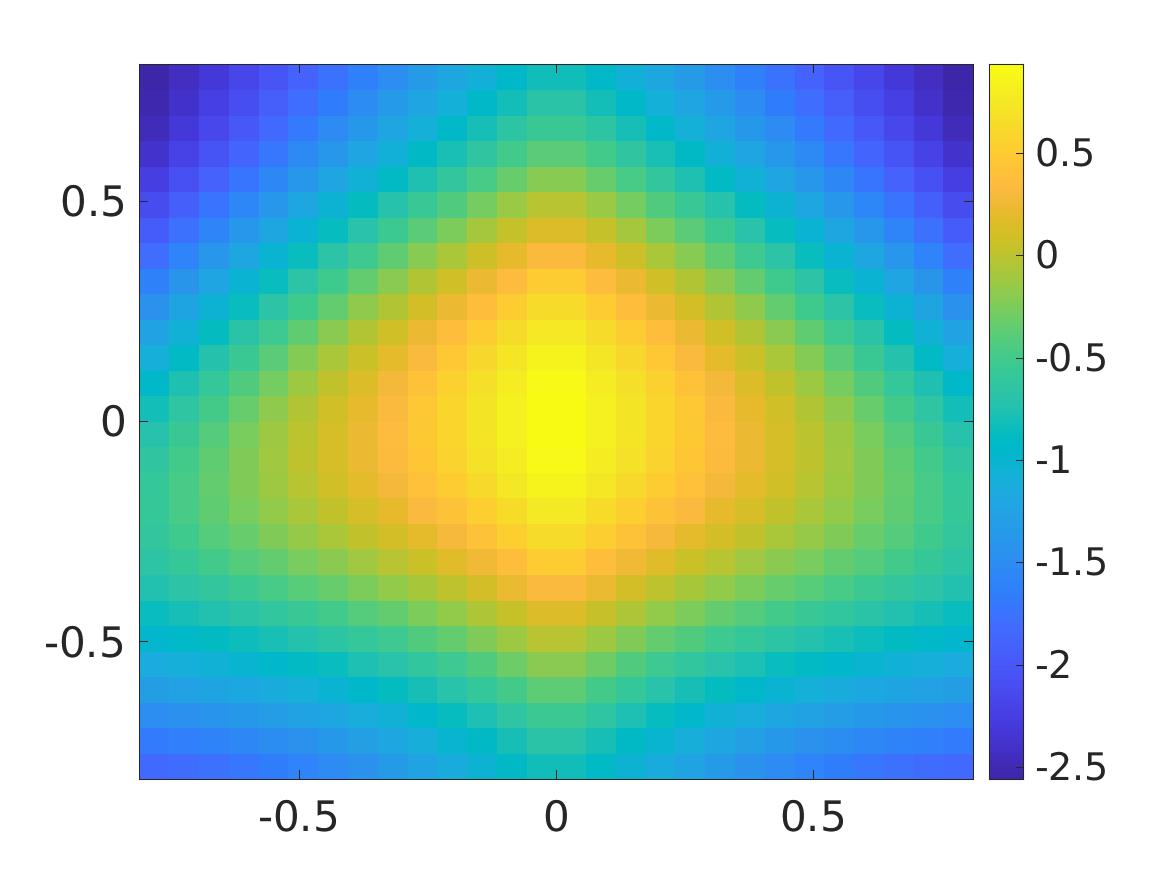}}
    \quad
    \subfloat[  The relative error]{\includegraphics[width = .3\textwidth]{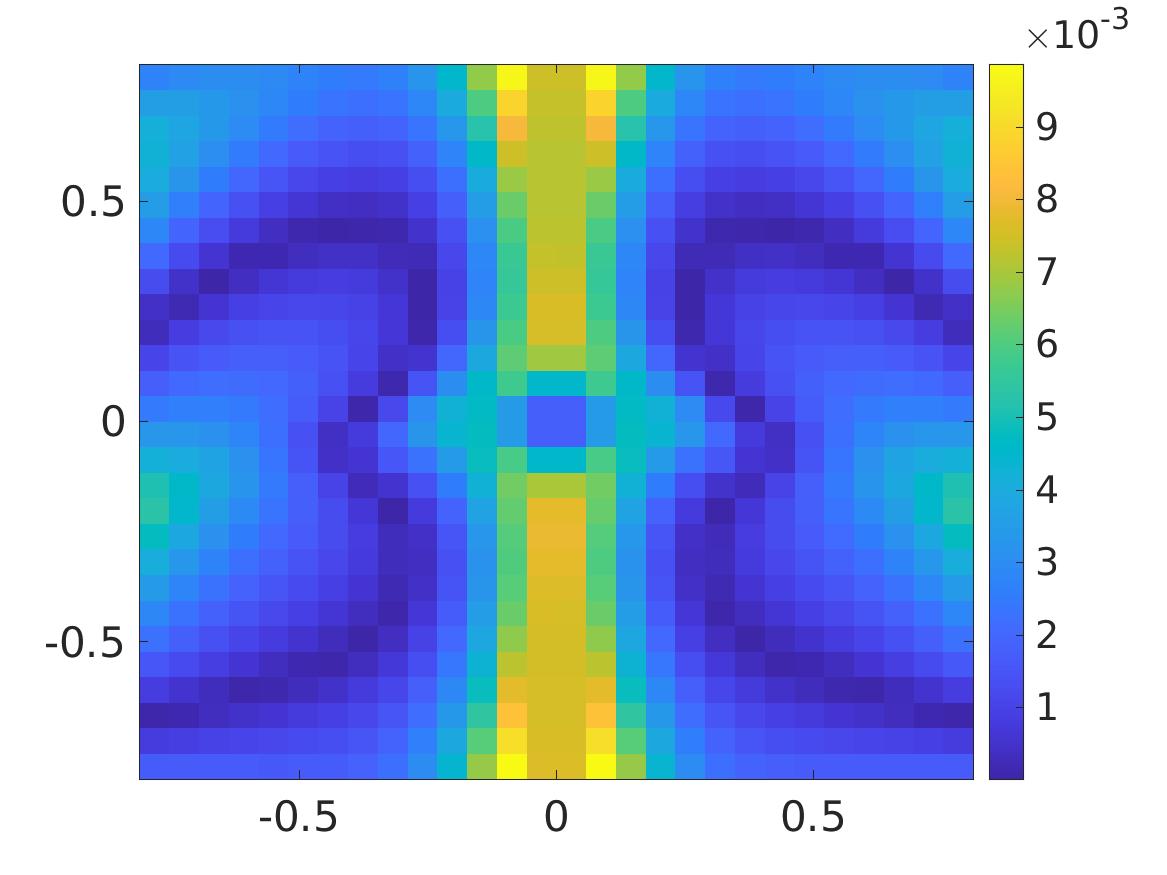}}
    \caption{Test 6. True and computed solutions to Hamilton-Jacobi equation \eqref{test5} in the interval $(-0.8, 0.8)^2$. The relative error in (c) is given by $\frac{|u_{\rm comp} - u^*|}{\|u^*\|_{L^{\infty}(G)}}$. The maximal value of this error function is $0.0099$.}
    \label{fig_5da}
\end{figure}

It is remarkable when observing that although the true solution is not in the class $C^1$, it can be computed. The error occurs in a neighborhood  the line $\{(x = 0, y)\}$ where $u$ is not differentiable. 


\section{Concluding remarks}\label{sec6}
In this paper, we have developed a new version of the Carleman based convexification method to compute the viscosity solutions to Hamilton-Jacobi equations on the whole space. 
Our procedure consists of two main stages. In Stage 1, we derive from the given Hamilton-Jacobi equation on $\R^d$ another Hamilton-Jacobi equation on a bounded domain by applying a truncation technique and a simple change of variable. 
It is important to mention that the boundary conditions for the Hamilton-Jacobi equation obtained Stage 1 cannot be exactly computed.
Only approximations ones are derived. 
This feature makes the original convexification method is not applicable.
In Stage 2, we develop the new version of the Carleman-based convexification method to solve the new Hamilton-Jacobi equation with approximated boundary conditions. 
The main theorems in this paper guarantee that the Carleman-based convexification method in Stage 2 delivers reliable numerical solutions to nonlinear Hamilton-Jacobi equations without requiring good initial guess.

\section*{Acknowledgement} This work  was partially supported  by National Science Foundation grant DMS-2208159, and
by funds provided by the Faculty Research Grant program at UNC Charlotte Fund No. 111272.


\end{document}